\documentclass[reqno]{amsart}
\usepackage{mathrsfs}
\usepackage{times}
\usepackage[T1]{fontenc}
\usepackage{mathrsfs}
\usepackage{latexsym}
\usepackage[dvips]{graphics}
\usepackage{epsfig}
\usepackage{amsmath,amsfonts,amsthm,amssymb,amscd}
\input amssym.def
\input amssym.tex
\usepackage{color}
\usepackage[all]{xypic}

\newcommand{\LV}{\left|}
\newcommand{\RV}{\right|}
\newcommand{\LC}{\left(}
\newcommand{\RC}{\right)}
\newcommand{\LB}{\left[}
\newcommand{\RB}{\right]}
\newcommand{\LCB}{\left\{}
\newcommand{\RCB}{\right\}}

\newcommand{\Int}{\int_{\mathbb{R}}}

\newtheorem{theorem}{Theorem}[section]
\newtheorem{lemma}[theorem]{Lemma}
\newtheorem{proposition}[theorem]{Proposition}

\newtheorem{definition}[theorem]{Definition}
\theoremstyle{remark}
\newtheorem{remark}[theorem]{Remark}
\newtheorem*{remark*}{Remark}
\theoremstyle{remark}
\pagestyle{plain}

\begin{document}

\title{Stability of solitary waves and global existence of a generalized two-component Camassa-Holm system}
\author{Robin Ming Chen}
\address{Robin Ming Chen\newline
School of Mathematics\\
University of Minnesota\\
Minneapolis, MN 55455} \email{chenm@math.umn.edu}
\author{Yue~Liu}
\address{Yue Liu\newline
University of Texas at Arlington, Department of Mathematics, Arlington, TX 76019-0408}
\email{yliu@uta.edu}
\author{Zhijun Qiao}
\address{Zhijun Qiao\newline
Department of Mathematics, University of Texas-Pan American, Edinburg, TX 78539}
\email{qiao@utpa.edu}

\thanks{The work of R.M. Chen was partially supported by the NSF grant DMS-0908663. The work of Y. Liu was partially supported by the NSF grant DMS-0906099 and the NHARP grant 003599-0001-2009. The work of Z. Qiao was partially supported by the NHARP grant 003599-0001-2009 and the USARO grant W911NF-08-1-0511.}

\maketitle \numberwithin{equation}{section}
\begin{abstract}
 We study here the existence of solitary wave solutions of a generalized two-component
Camassa-Holm system. In addition to those smooth solitary-wave solutions, we show that
there are  solitary waves with singularities: peaked and cusped solitary waves. We also
demonstrate that  all smooth solitary waves are orbitally stable in the energy space.
We finally give a sufficient condition for global strong solutions to the equation
in some special case.
\end{abstract}


\section{Introduction}
There are several classical models describing the motion of
waves at the free surface of shallow water under the influence
of gravity. Among these models, the best known is the
Korteweg-de Vries (KdV) equation \cite{Na,Wh}
\begin{equation*}
u_t+6uu_x+u_{xxx}=0.
\end{equation*}
The KdV equation admits solitary wave solutions, i.e. solutions
of the form $u(t,x)=\varphi(x-ct)$ which travel with fixed speed $c$,
and that vanish at infinity. The KdV solitary waves are smooth and
retain their individuality under interaction and eventually emerge
with their original shapes and speeds \cite{Dr}. Moreover, KdV is
an integrable infinite-dimensional Hamiltonian system \cite{Mc}.
However, the KdV equation does not model the phenomenon of breaking
for water waves. Instead, as soon as the initial profile
$u_0\in H^1(\mathbb{R})$, the solutions are global in time \cite{KPV},
whereas some shallow water waves break \cite{Wh}.

Another model, the Camassa-Holm (CH) equation \cite{CH1}
\begin{equation}\label{CH}
u_t-u_{xxt}+3uu_x=2u_xu_{xx}+uu_{xxx}
\end{equation}
arises as a model for the unidirectional propagation of shallow
water waves over a flat bottom \cite{CH1,CL,DGH1,DGH2,DGH3,Jo1}, as well as water
waves moving over an underlying shear flow \cite{Jo2}. Equation
\eqref{CH} is completely integrable with the Lax pair \cite{CH1} and
with infinitely many conservation laws as an bi-Hamiltonian system
\cite{FF}.

The CH equation has many remarkable properties that KdV does not
have like solitary waves with singularities and breaking waves. The
CH equation admits peaked solitary waves or ``peakons''
\cite{BSS,CH1,Le}: $u(t,x)=ce^{-|x-ct|}$, $c\neq0$, which are smooth
except at the crests, where they are continuous, but have a jump
discontinuity in the first derivative. The peakons capture a
feature that is characteristic for the waves of great height --
waves of the largest amplitude that are exact solutions of the
governing equations for water waves \cite{Co4,CM,To}. The CH
equation also models wave breaking (i.e. the solution remains
bounded while its slope becomes unbounded in finite time)
\cite{CH1,Co1,Co2,Co3,CE1,Mc,Wh}.

The CH equation also admits many multi-component generalizations. It
is intriguing to know if the above two properties may persist in the
systems. In this paper we consider the following generalized
two-component CH system established in \cite{CLi} which can be
derived from shallow water theory with nonzero constant vorticity
\begin{equation}\label{genCH2_uandrho}
\left\{\begin{array}{l}
u_t-u_{txx}-Au_x+3uu_x-\sigma(2u_xu_{xx}+uu_{xxx})+\rho\rho_x=0,\\
\rho_t+(\rho u)_x=0,
\end{array}\right.
\end{equation}
or equivalently, using the linear momentum $m=u-u_{xx}$,
 \begin{equation*}
\left\{\begin{array}{l}
m_t+\sigma um_x -Au_x +2\sigma mu_x+3(1-\sigma)uu_x+\rho\rho_x=0,\\
\rho_t+(\rho u)_x=0,
\end{array}\right.
\end{equation*}
where $u(t,x)$ is the horizontal velocity and $\rho(t,x)$ is related
to the free surface elevation from equilibrium (or scalar density)
with the boundary assumptions $u\to0$, $\rho\to 1$ as
$|x|\to\infty$. The scalar $A>0$ characterizes a linear underlying
shear flow and hence system \eqref{genCH2_uandrho} models
wave-current interactions. The real dimensionless constant $\sigma$ is a parameter which provides the competition, or balance, in fluid convection between nonlinear steepening and amplification due to stretching.

When $\sigma=1$ it recovers the standard two-component CH system
which is completely integrable \cite{CI, Iva, SA} as it can be
written as compatibility condition of two linear systems (Lax pair)
with a spectral parameter $\zeta$, that is,
\begin{align*}
\Psi_{xx}&=\LB -\zeta^2\rho^2+\zeta\LC m-{A\over2} +{1\over4} \RC
\RB \Psi,\\
\Psi_t&=\LC {1\over 2\zeta}-u \RC\Psi_x+{1\over2}u_x\Psi.
\end{align*}
In the case $\rho\equiv0$, it becomes
\begin{equation}\label{eqnrod}
 u_t-u_{xxt}+3uu_x=\sigma\LC 2u_xu_{xx}+uu_{xxx} \RC,
\end{equation}
which models finite length, small amplitude radial deformation waves in cylindrical
hyperelastic rods \cite{Dai}. System \eqref{genCH2_uandrho} has the following two Hamiltonians
\begin{align*}
 &H_1=\frac12\Int\ \left(u^2+u^2_x+(\rho-1)^2\right)\ dx,\\
 &H_2=\frac12\Int\ \left(u^3+\sigma uu^2_x+2u(\rho-1)+u(\rho-1)^2-Au^2\right)\ dx.
\end{align*}

We study solitary wave solutions of \eqref{genCH2_uandrho}, i.e. solutions of the form
\begin{equation*}
(u(x,t), \rho(x,t))=(\varphi(x-ct), \rho(x-ct)), \quad c\in\mathbb{R}
\end{equation*}
for some $\varphi, \rho: \mathbb{R}\to \mathbb{R}$ such that
$\varphi\to0$, $\rho\to1$ as $|x|\to\infty$. In the study of the CH
traveling waves it was observed through phase-plane analysis
\cite{LO} that both peaked and cusped traveling waves exist.
Subsequently, Lenells \cite{Le, Le0} used a suitable framework for
weak solutions to classify all weak traveling waves of the CH
equation \eqref{CH} and the hyperelastic rod equation
\eqref{eqnrod}.

Using a natural weak formulation of the two-component CH system, we
will establish exactly in what sense the peaked and cusped solitary
waves are solutions. It was shown in \cite{CI,LZ, Mu} that when
$\sigma=1$ the two component system \eqref{genCH2_uandrho} has only
smooth solitary waves, with a single crest profile and exponential
decay far out. In \cite{HNT}, the authors considered a modified two-component CH equation which
allows dependence on average density as well as pointwise density and a
linear dispersion is added to the first equation of the system. They showed that
the modified system admits peakon solutions in both $u$ and $\rho$. However it
is unclear whether the generalized two-component CH system \eqref{genCH2_uandrho} has
solitary waves with singularities.
We show here peaked solitary waves exist when
$\sigma>1$. We also provide an implicit formula for the peaked
solitary waves. However whether these peaked solitary waves are
solitons still remains open.

The stability of solitary waves is one of the fundamental
qualitative properties of the solutions of nonlinear wave equations
\cite{St}. Due to the fact that the solitons hardly interact with
each other at all it is reasonable to expect that they are stable.
It has been proved that for the CH equation, the smooth solitary
waves are orbitally stable \cite{CW2}. Moreover, the peakons,
whether solitary waves or periodic waves, are also orbitally stable
\cite{CMo, CW1, Le1, Le2}. It was shown in \cite{LZ} that when
$\sigma=1$ all solitary waves are orbitally stable. We prove in this
paper that when $\sigma\leq1$ all smooth solitary waves are
orbitally stable. The proof of the stability basically follows the
general approach in \cite{GSS}. In comparison with the spectral
arguments on the Hessian operator in \cite{GSS}, here we require more
precise analysis on the spectrum of a linearized operator around the
solitary waves for the system \eqref{genCH2_uandrho}.

A special case of system \eqref{genCH2_uandrho} is when $\sigma=0$.
In the scalar equation case when $\sigma=0$ it is the BBM equation \cite{BBM}.
The solutions are shown to be global in time. 
We show that the same results hold in the system case.

This paper is organized as follows. In Section \ref{sec_solwaves} we classify
the solitary waves of \eqref{genCH2_uandrho}. In particular we show the existence
of peaked solitary waves for $\sigma>1$. In Section \ref{sec_stab} we prove that
when $\sigma\leq1$ all smooth solitary waves are nonlinearly stable. Finally in
Section \ref{sec_global},  we show that the system \eqref{genCH2_uandrho} is
globally well-posed for $\sigma=0$.

\vspace{.1in}


\section{Solitary waves}\label{sec_solwaves}


Let $X=H^1(\mathbb{R})\times L^2(\mathbb{R})$ be a real Hilbert space with inner product $(,)$, and
denote its element by $\vec{u}=(u,\eta)$. The dual of $X$ is
$X^*=H^{-1}(\mathbb{R})\times L^{2}(\mathbb{R})$ and a natural isomorphism $I$ from $X$
to $X^*$ can be
defined by
\begin{equation*}
 I=\LC \begin{array}{cc}
        1-\partial_x^2 & 0\\
        0              & 1
       \end{array}
 \RC.
\end{equation*}
Using the map $I$, the paring $\langle,\rangle$ between $X$ and $X^*$ can be represented as
\begin{equation*}
 \langle I\vec{u}, \vec{v} \rangle = \langle u, v \rangle_1+ \langle \eta, \xi \rangle_0, \quad \hbox{for }\
\vec{u}=(u,\eta)\in X,\ \vec{v}=(v,\xi)\in X^*,
\end{equation*}
where $\langle , \rangle_s$ denotes the $H^s(\mathbb{R})\times H^{-s}(\mathbb{R})$ dual pairing.
We will identify the second dual $X^{**}$ with $X$ in a natural way.

Since $\rho\to 1$ as $|x|\to\infty$ in \eqref{genCH2_uandrho}, we can let $\rho=1+\eta$ with $\eta\to0$
as $|x|\to\infty$ and hence we can rewrite system \eqref{genCH2_uandrho} as
\begin{equation}\label{genCH2_uandeta}
\left\{\begin{array}{l}
u_t-u_{txx}-Au_x+3uu_x-\sigma(2u_xu_{xx}+uu_{xxx})+(1+\eta)\eta_x=0,\\
\eta_t+\LC (1+\eta) u \RC_x=0.
\end{array}\right.
\end{equation}
The two Hamiltonians introduced in the Introduction define the following two functionals on $X$
\begin{align}
 & E(\vec{u})={1\over2}\Int\ \left(u^2+u^2_x+\eta^2\right)\ dx, \label{consE}\\
 & F(\vec{u})={1\over2}\Int\ \left(u^3+\sigma uu^2_x+2u\eta+u\eta^2-Au^2\right)\ dx, \label{consF}
\end{align}
with $\vec{u}=(u,\eta)\in X$. The quantity $E(\vec{u})$ associates with the translation invariance of
\eqref{genCH2_uandeta}. Using functional $F(\vec{u})$, system \eqref{genCH2_uandeta} can be written
in an abstract Hamiltonian form
\begin{equation}\label{absF}
 \vec{u}_t = J F'(\vec{u}),
\end{equation}
where $J$ is a closed skew symmetric operator given by
\begin{equation*}
 J=\LC \begin{array}{cc}
        -\partial_x(1-\partial_x^2)^{-1} & 0\\
        0              & -\partial_x
       \end{array}
 \RC 
\end{equation*}
and $F'(\vec{u}): X\to X^*$  is the variational derivative of $F$ in $X$ at $\vec{u}$.

Note that if
\begin{equation}\label{def_p}
 p(x):=\frac12e^{-|x|}, x\in \mathbb{R},
\end{equation}
then
$(1-\partial^2_x)^{-1}f=p\ast f$ for all $f\in L^2(\mathbb{R})$.  We can then further rewrite system
\eqref{genCH2_uandeta} in a weak form as
\begin{equation}\label{genCH2_uandeta_convolution}
\left\{\begin{array}{l}
\displaystyle u_t+\sigma uu_x+\partial_x
p\ast\LC -Au+\frac{3-\sigma}{2}u^2+\frac{\sigma}{2}u_x^2+\frac12(1+\eta)^2 \RC=0,\\\\
\eta_t+\LC (1+\eta)u \RC_x=0.
\end{array}
\right.
\end{equation}

\begin{definition}\label{def_solution}
 Let $0<T\leq\infty$. A function $\vec{u}=(u,\eta)\in C\LC [0,T); X \RC$
is called a solution
of \eqref{genCH2_uandeta} on $[0,T)$ if it satisfies \eqref{genCH2_uandeta_convolution} in
the distribution
sense on $[0,T)$ and $E(\vec{u})$ and $F(\vec{u})$ are conserved.
\end{definition}

Applying transport equation theory combined with the method of Besov spaces, one may follow
the similar argument as in \cite{GL1} to obtain the following local well-posedness result for the system
\eqref{genCH2_uandeta}.
\begin{theorem}\label{thm_localwellposedness}
 If $(u_0, \eta_0)\in H^s\times H^{s-1}$, $s>3/2$, then there exist
 a maximal time $T=T(\|(u_0,\eta_0)\|_{H^s\times H^{s-1}})>0$ and a
 unique solution $(u,\eta)$ of \eqref{genCH2_uandrho} in
 $C([0,T); H^s\times H^{s-1})\cap C^1([0,T); H^{s-1}\times H^{s-2})$
 with $(u,\eta)|_{t=0}=(u_0,\eta_0)$. Moreover, the solution depends
 continuously on the initial data and $T$ is independent of $s$.
\end{theorem}

It is easily seen from the embedding $H^1(\mathbb{R})\hookrightarrow L^\infty(\mathbb{R})$ that
$E(\vec{u})$ and $F(\vec{u})$ are both well defined in $H^s\times H^{s-1}$ with $s>3/2$, and $E(\vec{u})$
is conserved, as suggested
in the local wellposedness Thoerem \ref{thm_localwellposedness}. From \eqref{absF} we see that
\begin{equation*}
 {d\over dt}F(\vec{u})=\langle F'(\vec{u}), \vec{u}_t \rangle = \langle F'(\vec{u}), JF'(\vec{u}) \rangle
=0.
\end{equation*}
So $F(\vec{u})$ is also invariant.

Now we give the definition of {\em solitary waves} of \eqref{genCH2_uandeta}.
\begin{definition}\label{def_solitary}
 A solitary wave of \eqref{genCH2_uandeta} is a nontrivial traveling wave solution of
\eqref{genCH2_uandeta}
of the form $\vec{\varphi}_c(t,x)=\LC \varphi_c(x-ct), \eta_c(x-ct) \RC\in H^1(\mathbb{R})\times H^1(\mathbb{R})$
with $c\in \mathbb{R}$ and
$\varphi_c, \eta_c$ vanishing at infinity.
\end{definition}

For a solitary wave $\vec{\varphi}=(\varphi,\eta)$ with speed $c\in \mathbb{R}$, it satisfies
\begin{equation}\label{soleqnfull}
 \left\{\begin{array}{l}
\displaystyle         \LB -c\varphi+{\sigma\over2} \varphi^2+
p\ast\LC -A\varphi+\frac{3-\sigma}{2}\varphi^2+\frac{\sigma}{2}\varphi_x^2+\frac12(1+\eta)^2 \RC \RB_x=0,\\\\
         \LB-c\eta+(1+\eta)\varphi \RB_x=0,
        \end{array}
 \right. \quad \hbox{in } \mathcal{D}'(\mathbb{R}).
\end{equation}
Integrating the above system and applying $(1-\partial^2_x)$ to the first equation we get
\begin{equation}\label{soleqn}
 \left\{\begin{array}{l}
\displaystyle  -(c+A)\varphi+c\varphi_{xx}+{3\over2} \varphi^2=\sigma \varphi\varphi_{xx}+
           \frac{\sigma}{2}\varphi_x^2-\frac12(1+\eta)^2 +{1\over2},
\quad \hbox{in } \mathcal{D}'(\mathbb{R}).\\
          -c\eta+(1+\eta)\varphi =0.
        \end{array}
 \right.
\end{equation}
The fact that the second equation of the above holds in a strong sense comes from the regularity of
$\varphi$ and $\eta$.

\begin{proposition}\label{prop_sol}
 If $(\varphi, \eta)$ is a solitary wave of \eqref{genCH2_uandeta} for some
$c\in \mathbb{R}$, then $c\neq0$ and $\varphi(x)\neq c $ for any $x\in\mathbb{R}$.
\end{proposition}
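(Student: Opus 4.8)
The plan is to base everything on the second equation of \eqref{soleqn}, namely $-c\eta+(1+\eta)\varphi=0$, which is purely algebraic and, by the stated regularity, holds pointwise, and to combine it with the fact that $\varphi,\eta\in H^1(\mathbb{R})$ are continuous and vanish at infinity. The first observation is that the claim $\varphi(x)\neq c$ is almost free once $c\neq 0$ is known: if $\varphi(x_0)=c$ for some $x_0$, substituting into $-c\eta+(1+\eta)\varphi=0$ collapses the left-hand side to $-c\eta(x_0)+(1+\eta(x_0))c=c$, forcing $c=0$, a contradiction. Thus the whole proposition reduces to showing $c\neq 0$.

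To prove $c\neq 0$ I would argue by contradiction, assuming $c=0$. Then the second equation reads $(1+\eta)\varphi=0$, so $\varphi$ vanishes wherever $\eta\neq-1$. If $\varphi\equiv 0$, the first equation of \eqref{soleqn} degenerates to $(1+\eta)^2=1$; continuity of $\eta$ together with $\eta\to 0$ at infinity then forces $\eta\equiv 0$, so $(\varphi,\eta)$ is trivial, contradicting the definition of a solitary wave. Hence $\varphi\not\equiv 0$, and I set $U=\{x:\varphi(x)\neq 0\}$, a nonempty open set on which $\eta\equiv-1$. By continuity $\eta\equiv-1$ on $\overline{U}$, and since $\eta\to 0$ at infinity this forces $\overline{U}$ to be bounded; put $b=\sup U<\infty$, so that $b\in\overline{U}$ and therefore $\eta(b)=-1$.

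The contradiction then comes from the right of $b$: on $(b,\infty)$ one has $\varphi\equiv 0$, so $\varphi$ is smooth there and the first equation of \eqref{soleqn} holds classically, again yielding $(1+\eta)^2=1$ and hence $\eta\equiv 0$ on $(b,\infty)$. Letting $x\to b^+$ and using continuity gives $\eta(b)=0$, which is incompatible with $\eta(b)=-1$. This rules out $\varphi\not\equiv 0$, and since $\varphi\equiv 0$ was already excluded, we conclude $c\neq 0$; the estimate $\varphi\neq c$ then follows from the algebraic step of the first paragraph.

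The main obstacle is precisely the middle step: the relation $(1+\eta)\varphi=0$ gives no information about $\eta$ on the zero set of $\varphi$, so no contradiction is visible from it alone. The resolution is to locate a genuine open interval $(b,\infty)$ on which $\varphi$ vanishes identically — guaranteed by the boundedness of $U$, which in turn comes from the decay of $\eta$ — and to use the \emph{differential} first equation there to pin down $\eta\equiv 0$, playing this off against the value $\eta(b)=-1$ inherited from $\overline{U}$ by continuity. The only points requiring care are that all these limits and pointwise evaluations are justified by the $H^1$ regularity, and that on $(b,\infty)$, where $\varphi\equiv 0$, the first equation is satisfied in the classical sense so the deduction $\eta\equiv 0$ is rigorous.
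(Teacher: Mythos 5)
Your proposal is correct and follows essentially the same route as the paper: both reduce to the case $c=0$, combine the algebraic relation $(1+\eta)\varphi=0$ with the first equation of \eqref{soleqn} (which forces $(1+\eta)^2=1$, hence $\eta\equiv 0$ by continuity and decay, wherever $\varphi$ vanishes), and derive a contradiction at the right endpoint of the set where $\varphi\neq 0$. The only cosmetic difference is that you obtain the contradiction from the two incompatible boundary values $\eta(b)=-1$ (using $\eta\equiv-1$ on $\{\varphi\neq0\}$) and $\eta(b)=0$, whereas the paper instead propagates $1+\eta>0$ slightly to the left of that endpoint to force $\varphi=0$ there and contradict its maximality; your explicit treatment of the degenerate case $\varphi\equiv0$ is a small point the paper leaves implicit.
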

\begin{proof}
 From the definition of solitary waves and the
embedding theorem we know that $\varphi$ and
$\eta$ are both continuous. If $c=0$, then \eqref{soleqn} becomes
\begin{equation}\label{soleqnc=0}
 \left\{\begin{array}{l}
\displaystyle  -A\varphi+{3\over2} \varphi^2=\sigma \varphi\varphi_{xx}+
           \frac{\sigma}{2}\varphi_x^2-\frac12(1+\eta)^2 +{1\over2},\\
          (1+\eta)\varphi =0.
        \end{array}
 \right.
\end{equation}
Since $\eta$ vanishes at infinity, the second equation of the above system indicates that
$\varphi(x)=0$ for $|x|$ large enough. Denote $x_0=\max\{x:\ \varphi(x)\neq0 \}$. Hence
$\varphi(x)=0$ on $[x_0,\infty)$ and $\varphi \not \equiv 0$ on $(x_0-\delta, x_0)$ for
any $\delta>0$. Consider now the first equation of \eqref{soleqnc=0} on $[x_0,\infty)$ we see
that $\eta\equiv0$ on $[x_0,\infty)$. Then the continuity of $\eta$ implies that there exists a
$\delta_1>0$ such that $1+\eta(x)>0$ on $(x_0-\delta_1, x_0)$. This together with the second
equation of \eqref{soleqnc=0} leads to $\varphi(x)\equiv0$ on $(x_0-\delta_1, x_0)$, which
is a contradiction. Therefore $c\neq0$.

Next we show $\varphi\neq c$. If not and there is some $x_1\in\mathbb{R}$ such that
$\varphi(x_1)=c$. Then the second equation of \eqref{soleqn} infers that
\begin{equation*}
 \varphi(x_1)=\LC c-\varphi(x_1) \RC \eta(x_1) = 0,
\end{equation*}
so $c=0$, which is a contraction.
\end{proof}

Using the above proposition we obtain from the second equation of \eqref{soleqn} that
\begin{equation}\label{soleta_phi}
 \eta={\varphi \over c-\varphi}.
\end{equation}
Plugging this into the first equation of \eqref{soleqn} we obtain an equation for the unknown
$\varphi$ only
\begin{equation}\label{eqnphi}
 -(c+A)\varphi+c\varphi_{xx}+{3\over2} \varphi^2=\sigma \varphi\varphi_{xx}+
           \frac{\sigma}{2}\varphi_x^2-\frac12{c^2\over (c-\varphi)^2} +{1\over2},
\quad \hbox{in } \mathcal{D}'(\mathbb{R}).
\end{equation}

\subsection{The case when $\sigma=0$}

When $\sigma=0$, \eqref{eqnphi} becomes
\begin{equation}\label{eqnphi_sigma=0}
 \varphi_{xx}={c+A\over c}\varphi-{3\over2c} \varphi^2+{1\over2c}-
\frac12{c\over (c-\varphi)^2},
\quad \hbox{in } \mathcal{D}'(\mathbb{R}).
\end{equation}
Since $\varphi\in H^1(\mathbb{R})$ and $c-\varphi\neq0$ we know that $|c-\varphi|$ is bounded away
from 0. Hence from the standard local regularity theory to elliptic equation we see
that $\varphi\in C^\infty(\mathbb{R})$ and so is $\eta$. Therefore in this case all solitary waves are smooth.

As for the existence, we may multiply \eqref{eqnphi_sigma=0} by $\phi_x$ and integrate on $(-\infty,x]$ to get
\begin{equation}\label{eqnphi_x_sigma=0}
 \varphi^2_x={\varphi^2(c-\varphi-A_1)(c-\varphi-A_2) \over c(c-\varphi)}
 := G(\varphi),
\end{equation}
where
\begin{equation}\label{def_A_12}
 A_1={-A+\sqrt{A^2+4} \over 2}, \quad A_2={-A-\sqrt{A^2+4} \over 2}
\end{equation}
are the two roots of the equation $y^2+Ay-1=0$. Since $A>0$, we know $A_1>0>A_2$.

From the decay property of $\phi$ at infinity we know that a necessary condition for the existence is $c\geq A_1$ or $c\leq A_2$. But one may prove further that
\begin{theorem}\label{thm_exist_sigma=0}
When $\sigma=0$, \eqref{genCH2_uandeta} admits a solitary wave solution if and only if
\begin{equation}\label{cond_c}
 c>A_1 \quad \hbox{or} \quad c<A_2.
\end{equation}
All solitary waves are smooth in this case.
\end{theorem}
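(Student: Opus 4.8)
The plan is to reduce the whole statement to a phase-plane analysis of the first-order equation \eqref{eqnphi_x_sigma=0}, $\varphi_x^2=G(\varphi)$, and to read off both directions of the equivalence from the location and multiplicity of the zeros of $G$. Smoothness has already been secured just above the statement via elliptic regularity (using that $c-\varphi$ stays bounded away from $0$ along any solitary wave), so the real content is purely the existence/non-existence dichotomy for the homoclinic orbit of $\varphi_x^2=G(\varphi)$.

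First I would record the behavior of $G$ near the rest state $\varphi=0$. Expanding \eqref{eqnphi_x_sigma=0} and using that $A_1,A_2$ from \eqref{def_A_12} are the roots of $y^2+Ay-1$, one finds
\[
G(\varphi)=\frac{(c-A_1)(c-A_2)}{c^2}\,\varphi^2+O(\varphi^3)\qquad(\varphi\to0),
\]
so $0$ is always at least a double zero and the sign of the quadratic coefficient is exactly the sign of $(c-A_1)(c-A_2)$. This yields necessity of the weak inequality at once: a solitary wave forces $\varphi_x^2=G(\varphi)\geq0$ along its orbit, and since the orbit accumulates at $\varphi=0$ (where $\varphi$ takes small nonzero values), a negative coefficient would give $G<0$ there, a contradiction. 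Hence $(c-A_1)(c-A_2)\geq0$, i.e. $c\geq A_1$ or $c\leq A_2$.

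Next, for sufficiency I would catalogue the zeros and the pole of $G$: the numerator vanishes at $\varphi=0$ (double), $\varphi=c-A_1$, and $\varphi=c-A_2$, while $\varphi=c$ is a pole. When $c>A_1$ the ordering is $0<c-A_1<c<c-A_2$ and a direct sign check gives $G>0$ on $(0,c-A_1)$; when $c<A_2$ the ordering is $c-A_1<c<c-A_2<0$ and $G>0$ on $(c-A_2,0)$. In each case $0$ is a double zero, the adjacent turning point $\varphi_m$ (namely $c-A_1$, resp. $c-A_2$) is a simple zero, and the whole orbit is separated from the pole $\varphi=c$. I would then construct the symmetric solitary wave by separation of variables, defining $\varphi$ implicitly through $|x|=\int_\varphi^{\varphi_m}G(s)^{-1/2}\,ds$: the double zero at $0$ makes this integral diverge as the lower limit tends to $0$, producing the infinite decaying tail $\varphi\to0$ as $|x|\to\infty$, while the simple zero at $\varphi_m$ makes it converge at the crest, giving a genuine homoclinic profile.

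The main obstacle, and the reason the inequality must be strict, is ruling out $c=A_1$ and $c=A_2$. There the simple zero $\varphi_m$ collides with the double zero at $0$ to form a triple zero: e.g. at $c=A_1$ one has $c-\varphi-A_1=-\varphi$, so $G(\varphi)=-\varphi^3(c-\varphi-A_2)/[c(c-\varphi)]$. The constraint $G\geq0$ near $0$ now pins the orbit to one side ($\varphi\leq0$ when $c=A_1$), but a sign check shows $G>0$ for \emph{all} $\varphi$ on that entire side, so there is no finite turning point and the orbit runs off to infinity rather than returning to $0$. Thus no bounded decaying profile exists, the endpoints are excluded, and the ``only if'' part is complete. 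I expect this degenerate-zero bookkeeping to be the delicate step, while the sufficiency construction is routine once the sign of $G$ is pinned down.
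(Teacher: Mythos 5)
Your proposal is correct, and its skeleton coincides with the paper's: reduce to the first-order equation $\varphi_x^2=G(\varphi)$, read necessity of the weak inequality from the sign of $G$ near the double zero $\varphi=0$, build the wave by quadrature between the double zero and the adjacent simple zero when $c>A_1$ or $c<A_2$, and devote the real work to excluding $c=A_1$ and $c=A_2$. The one place where you genuinely diverge is that exclusion step. The paper argues dynamically: at $c=A_1$ it checks that $G_1(\varphi)=-\varphi^3(A_1-A_2-\varphi)/[A_1(A_1-\varphi)]$ is monotone decreasing for $\varphi<0$, uses this to continue the local ODE solution to all of $\mathbb{R}$, and concludes $\varphi(x)\to-\infty$, hence $\varphi\notin H^1$. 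You instead argue statically: $G_1<0$ for small $\varphi>0$ pins the orbit to $\varphi\le0$, and $G_1>0$ on all of $(-\infty,0)$ means $G_1$ has no nonzero root on the admissible side, so $\varphi_x$ cannot vanish at the (necessarily attained, nonzero) extremum of a would-be solitary wave --- a contradiction with no continuation argument needed. Your version is shorter and avoids the Lipschitz-continuation bookkeeping; the paper's version yields the extra qualitative information that the orbit actually escapes to $-\infty$. You are also more explicit than the paper about the sufficiency direction (the divergence of $\int G^{-1/2}$ at the double zero versus its convergence at the simple zero), which the paper leaves to the general $\sigma\neq0$ machinery of Section~\ref{sec_solwaves}. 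Both arguments are complete; just make sure, in the static version, to state that the extremum of a nontrivial continuous profile decaying at both ends is attained at a finite point where $\varphi_x=0$, since that is the hinge of your contradiction.
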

\begin{proof}
The regularity is discussed as above. So we will just focus on the existence part.

If $c=A_1$, then \eqref{eqnphi_x_sigma=0} becomes
\begin{equation}\label{eqnphi_x-A_1_sigma=0}
 \varphi^2_x={-\varphi^3(A_1-A_2-\varphi) \over A_1(A_1-\varphi)}:=G_1(\varphi).
\end{equation}
Hence we see that $\varphi(x)<0$ near $-\infty$. Because $\varphi(x)\to0$ as $x\to-\infty$, there is some $x_0$
sufficiently large negative so that
$\varphi(x_0)=-\epsilon<0$, with $\epsilon$ sufficiently small, and $\varphi_x(x_0)<0$. From standard ODE theory, we can
generate a unique local solution $\varphi(x)$ on $[x_0-L, x_0+L]$ for some $L>0$. Since $A_1>0>A_2$, we have
\begin{equation}\label{eqn_decreasing}
 \LB {-\varphi^3(A_1-A_2-\varphi) \over (A_1-\varphi)} \RB'={\varphi^2\LB -3\varphi^2+(6A_1-2A_2)\varphi
-3A_1(A_1-A_2) \RB\over (A_1-\varphi)^2}<0,
\end{equation}
for $\varphi<0$. Therefore $G_1(\varphi)$ decreases for $\varphi<0$.
Because $\varphi_x(x_0)<0$, $\varphi$ decreases near $x_0$, so
$G_1(\varphi)$ increases near $x_0$. Hence from
\eqref{eqnphi_x-A_1_sigma=0}, $\varphi_x$ decreases near $x_0$, and then
$\varphi$ and $\varphi_x$ both decreases on $[x_0-L, x_0+L]$. Since
$\sqrt{G_1(\varphi)}$ is locally Lipschitz in $\varphi$ for
$\varphi\leq0$, we can easily continue the local solution to all of $\mathbb{R}$ and obtain that $\phi(x)\to-\infty$ as $x\to\infty$, which fails to be in $H^1(\mathbb{R})$. Thus there is no solitary wave in this case.

Similarly we have that when $c=A_2$ there is no solitary wave. Therefore the theorem is proved.
\end{proof}

\subsection{The case when $\sigma\neq0$}

In this case we can rewrite \eqref{eqnphi} as
\begin{equation}\label{eqnphi_sigmanot0}
 \LC (\varphi-{c\over\sigma})^2 \RC_{xx}=\varphi_x^2-{2(c+A)\over\sigma}\varphi
+{3\over\sigma}\varphi^2-{1\over\sigma}+{c^2\over \sigma(c-\varphi)^2},
\quad \hbox{in } \mathcal{D}'(\mathbb{R}).
\end{equation}

The following lemma deals with the regularity of the solitary waves.
The idea is inspired by the study of the traveling waves of
Camassa-Holm equation \cite{Le}.
\begin{lemma}\label{lm_smoothphi}
 Let $\sigma\neq0$ and $(\varphi,\eta)$ be a solitary wave of \eqref{genCH2_uandeta}.
Then
\begin{equation}\label{regsol}
 \LC \varphi-{c\over\sigma} \RC^k \in C^j\LC\mathbb{R}\backslash \varphi^{-1}(c/\sigma) \RC, \quad \hbox{for }\ k\geq 2^j.
\end{equation}
Therefore
\begin{equation}\label{regsol_1}
 \varphi\in C^\infty\LC \mathbb{R}\backslash \varphi^{-1}({c/\sigma}) \RC.
\end{equation}
\end{lemma}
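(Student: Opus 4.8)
The plan is to reduce everything to the single unknown $w:=\varphi-c/\sigma$ on the open set $U:=\mathbb{R}\setminus\varphi^{-1}(c/\sigma)=\{x:w(x)\neq0\}$, on each connected component of which $w$ has a fixed sign. First I would record the two structural facts that equation \eqref{eqnphi_sigmanot0} supplies. Writing its right-hand side as a function $f(\varphi)$ of $\varphi$ alone, note that by Proposition \ref{prop_sol} and the decay of $\varphi$ the quantity $c-\varphi$ is bounded away from $0$, so $f$ is a smooth function of $\varphi$, and since $\varphi=c/\sigma\pm\sqrt{w^2}$ (with fixed sign) on each component of $U$, where $w^2>0$, it is a smooth function of $w^2$ there. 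Second, \eqref{eqnphi_sigmanot0} reads $(w^2)_{xx}=w_x^2+f$, and squaring $(w^2)_x=2ww_x$ gives the key division-free identity $4w^2w_x^2=\big((w^2)_x\big)^2$, which lets me trade the awkward factor $w_x^2$ for $\big((w^2)_x\big)^2$ at the cost of two powers of $w$.

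For the base regularity, $\varphi\in H^1$ gives $w\in C^0$ and $w_x\in L^2_{loc}$, so $(w^2)_x=2ww_x\in L^2_{loc}$ and $w^2\in H^1_{loc}$; since $w_x^2\in L^1_{loc}$ and $f\in L^\infty_{loc}$, the equation shows $(w^2)_{xx}\in L^1_{loc}$, whence $(w^2)_x\in C^0$ and $w^2\in C^1(\mathbb{R})$. This is the case $j=1$, $k=2$ of \eqref{regsol}. I would then prove \eqref{regsol} by induction on $j$ in the form $Q(j)$: $w^k\in C^j(U)$ for every $k\ge 2^j$. Eliminating $w\,w_{xx}$ through $(w^2)_{xx}=2w_x^2+2ww_{xx}=w_x^2+f$ and then applying the key identity, one obtains, for $k\ge4$, the division-free formula
\[
(w^k)_{xx}=\tfrac14 k\big(k-\tfrac32\big)\,w^{k-4}\big((w^2)_x\big)^2+\tfrac{k}{2}\,w^{k-2}f .
\]
To run the step $Q(j)\Rightarrow Q(j+1)$ I first upgrade the ``coefficients'': applying $Q(j)$ to the even power $k=2^j$ gives $w^{2^j}\in C^j(U)$, and since $w^{2^j}>0$ on $U$ and $t\mapsto t^{1/2^j}$ is smooth away from $0$, I extract $|w|=(w^{2^j})^{1/2^j}\in C^j(U)$, hence $w\in C^j(U)$ on each component, so that $(w^2)_x\in C^{j-1}(U)$ and $f\in C^j(U)$. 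Plugging these into the displayed formula for any $k\ge 2^{j+1}$, the spare powers $w^{k-2},w^{k-4}$ remain in $C^j(U)$ precisely because $k-2,k-4\ge2^j$, so the whole right-hand side lies in $C^{j-1}(U)$ and $w^k\in C^{j+1}(U)$; the small cases $j=0,1$ are checked by hand. This doubling $2^j\mapsto2^{j+1}$ is exactly the bookkeeping needed to keep the two spare factors at the current top regularity while each differentiation, after clearing $w^2$ from the denominator, consumes powers of $w$.

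Once \eqref{regsol} is established, \eqref{regsol_1} follows by the same root extraction: for any $j$ take $k=2^j$, so $w^{2^j}\in C^j(U)$ is positive and $w=\pm(w^{2^j})^{1/2^j}\in C^j(U)$; as $j$ is arbitrary, $\varphi=w+c/\sigma\in C^\infty(U)$. The step I expect to be the crux is controlling the nonlinearity $f$ and the gradient term $w_x^2$ without dividing by $w$, which degenerates on $\partial U$: the term $f$ is only as smooth as the first power of $w$, the least regular object in the hierarchy, so the argument must feed regularity back from high even powers through square roots, and it is the combination of the identity $4w^2w_x^2=\big((w^2)_x\big)^2$ with the power count $k\ge2^j$ that makes this self-consistent.
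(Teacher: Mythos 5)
Your proposal is correct and follows essentially the same route as the paper: the same substitution $v=\varphi-c/\sigma$, the same distributional identity $(v^k)_{xx}=k(k-\tfrac32)v^{k-2}v_x^2+\tfrac{k}{2}v^{k-2}f$, and the same trick of trading $v^{k-2}v_x^2$ for $\tfrac14 v^{k-4}\bigl((v^2)_x\bigr)^2$, with the induction threshold $k\geq 2^j$. The only organizational difference is that you upgrade the coefficients by extracting the root $|v|=(v^{2^j})^{1/2^j}$ on $\mathbb{R}\setminus v^{-1}(0)$ (legitimate since the lemma's conclusion is stated only on that set), whereas the paper stays root-free by writing $v^{k-2}v_x^2=\tfrac{1}{4(k-4)}(v^4)_x\,(v^{k-4})_x$ and only passes to the complement of $v^{-1}(0)$ at the $C^3$ stage.
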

\begin{proof}
 From Proposition \ref{prop_sol} we know that $c\neq0$ and $\varphi\neq c$ and thus $\varphi$
satisfies \eqref{eqnphi_sigmanot0}. Let $v=\varphi-{c\over\sigma}$
and denote
\begin{equation*}
 r(v)={3\over\sigma}\LC v+c\over\sigma \RC^2-{2(c+A)\over\sigma}\LC v+c\over\sigma \RC
-{1\over\sigma}.
\end{equation*}
So $r(v)$ is a polynomial in $v$. From the fact that
$\varphi-c\neq0$ we know that
\begin{equation}\label{cond_v}
{\sigma-1\over\sigma}c - v\neq0.
\end{equation}
Then $v$ satisfies
\begin{equation*}
 (v^2)_{xx}=v_x^2+r(v)+{c^2\over\sigma}\LC {\sigma-1\over\sigma}c - v \RC^{-2}.
\end{equation*}
From the assumption we know that $(v^2)_{xx}\in L^1_{{loc}}(\mathbb{R})$. Hence
$(v^2)_x$ is absolutely continuous and hence
\begin{equation*}
 v^2\in C^1(\mathbb{R}), \quad \hbox{and then }\ v\in C^1\LC \mathbb{R}\backslash v^{-1}(0) \RC.
\end{equation*}
So from \eqref{cond_v} and that $v+{c\over\sigma}\in
H^1(\mathbb{R})\subset C(\mathbb{R})$ we know
\begin{equation*}
 \LC {\sigma-1\over\sigma}c - v \RC^{-2}\in C(\mathbb{R})\cap C^1\LC\mathbb{R}\backslash v^{-1}(0) \RC.
\end{equation*}
Moreover,
\begin{align}
 (v^k)_{xx}&=(kv^{k-1}v_x)_x={k\over2}\LC v^{k-2}(v^2)_x \RC_x \nonumber\\
&=k(k-2)v^{k-2}v^2_x+{k\over2}v^{k-2}(v^2)_{xx} \nonumber\\
&=k(k-2)v^{k-2}v^2_x+{k\over2}v^{k-2}\LB v_x^2+r(v)+{c^2\over\sigma}
\LC {\sigma-1\over\sigma}c - v \RC^{-2} \RB \nonumber\\
&=k\LC k-{3\over2} \RC v^{k-2}v^2_x+{k\over2}v^{k-2}r(v)+{kc^2\over 2\sigma}v^{k-2}
\LC {\sigma-1\over\sigma}c - v \RC^{-2}.\label{v^2_xx}
\end{align}
For $k=3$, the right-hand side of \eqref{v^2_xx} is in $L^1_{loc}(\mathbb{R})$. Thus we deduce
that
\begin{equation*}
 v^3\in C^1(\mathbb{R}).
\end{equation*}
For $k\geq 4$ we see that \eqref{v^2_xx} implies
\begin{equation*}
 (v^k)_{xx}={k\over4}\LC k-{3\over2} \RC v^{k-4}\LB (v^2)_x \RB^2+{k\over2}v^{k-2}r(v)
+{kc^2\over 2\sigma}v^{k-2}\LC {\sigma-1\over\sigma}c - v \RC^{-2}
\in C(\mathbb{R}).
\end{equation*}
Therefore $v^k\in C^2(\mathbb{R})$ for $k\geq4$.

For $k\geq8$ we know from the above that
\begin{equation*}
 v^4, v^{k-4}, v^{k-2}, v^{k-2}r(v)\in C^2(\mathbb{R}),\ \hbox{and }\  v^{k-2}\LC {\sigma-1\over\sigma}c - v \RC^{-2}
 \in C^2\LC\mathbb{R}\backslash v^{-1}(0) \RC.
\end{equation*}
Moreover we have
\begin{equation*}
 v^{k-2}v^2_x={1\over4}(v^4)_x{1\over k-4}(v^{k-4})_x\in C^1(\mathbb{R}).
\end{equation*}
Hence from \eqref{v^2_xx} we conclude that
\begin{equation*}
 v^k\in C^3\LC\mathbb{R}\backslash v^{-1}(0) \RC, \quad k\geq8.
\end{equation*}

Applying the same argument to higher values of $k$ we prove that
$v^k\in C^j\LC\mathbb{R}\backslash v^{-1}(0) \RC$ for $k\geq 2^j$,
and hence \eqref{regsol}.
\end{proof}

Denote $\bar{x}=\min\{ x:\ \varphi(x)=c/\sigma \}$ (if $\varphi\neq c/\sigma$ for all $x$
then let $\bar{x}=+\infty$), then $\bar{x}\leq+\infty$. From
Lemma \ref{lm_smoothphi}, a solitary wave $\varphi$ is smooth on $(-\infty, \bar{x})$ and
hence \eqref{eqnphi} holds pointwise on $(-\infty, \bar{x})$. Therefore we may multiply by
$\varphi_x$ and integrate on $(-\infty,x]$ for $x<\bar{x}$ to get
\begin{equation}\label{eqnphi_x}
 \varphi^2_x={\varphi^2(c-\varphi-A_1)(c-\varphi-A_2) \over (c-\varphi)(c-\sigma\varphi)}
 := F(\varphi),
\end{equation}
where $A_1$ and $A_2$ are defined in \eqref{def_A_12}.

Applying the similar arguments as introduced in \cite{Le} we make the following conclusions.

1. When $\varphi$ approaches a simple zero $m=c-A_1$ or $m=c-A_2$ of $F(\varphi)$ so that $F(m)=0$ and $F'(m)\neq0$. The the solution
$\varphi$ of \eqref{eqnphi_x} satisfies
\begin{equation*}
 \varphi_x^2=(\varphi-m)F'(m)+O((\varphi-m)^2) \quad \hbox{as}\quad \varphi\to m,
\end{equation*}
where $f=O(g)$ as $x\to a$ means that $|f(x)/g(x)|$ is bounded in some interval
$[a-\epsilon, a+\epsilon]$ with $\epsilon>0$. Hence
\begin{equation}\label{simplezero}
 \varphi(x)=m+{1\over4}(x-x_0)^2F'(m)+O((x-x_0)^4) \quad \hbox{as}\quad x\to x_0,
\end{equation}
where $\varphi(x_0)=m$.

2. If $F(\varphi)$ has a double zero at $\varphi=0$, so that $F'(0)=0, F''(0)>0$, then
\begin{equation*}
 \varphi_x^2=\varphi^2F''(0)+O(\varphi^3) \quad \hbox{as}\quad \varphi\to 0.
\end{equation*}
We get
\begin{equation}\label{doublezero}
 \varphi(x)\sim \alpha\exp\LC -x\sqrt{F''(0)} \RC \quad \hbox{as}\quad x\to \infty
\end{equation}
for some constant $\alpha$. thus $\varphi\to 0$ exponentially as $x\to\infty$.

3. If $\varphi$ approaches a simple pole $\varphi(x_0)=c/\sigma$ of $F(\varphi)$ (when
$\sigma\neq1$). Then
\begin{align}\displaystyle
 &\varphi(x)-{c\over\sigma}=\beta|x-x_0|^{2/3} + O((x-x_0)^{4/3}) \quad \hbox{as} \quad x\to x_0,
\label{polephi}\\
&\varphi_x=\left\{\begin{array}{l}
                   {2\over3}\beta|x-x_0|^{-1/3}+O((x-x_0)^{1/3})\quad \hbox{as }\
                   x\downarrow x_0,\\
                   -{2\over3}\beta|x-x_0|^{-1/3}+O((x-x_0)^{1/3})\  \hbox{as }\
                   x\uparrow x_0,
                  \end{array}
           \right. \label{polephi_x}
\end{align}
for some constant $\beta$. In particular, when $F(\varphi)$ has a pole, the solution $\varphi$
has a cusp.

4. Peaked solitary waves occur when $\varphi$ suddenly changes direction: $\varphi_x\mapsto -\varphi_x$
according to \eqref{eqnphi_x}.

\vspace{.1in}

Now we give the following theorem on the existence of solitary waves of \eqref{genCH2_uandeta} for $\sigma\neq0$.

\begin{theorem}\label{thm_solexist}
For $\sigma\neq0$, we have
 \begin{enumerate}
  \item[(1)] If $0<\sigma\leq1$, a solitary wave  $(\varphi, \eta)$ of \eqref{genCH2_uandeta} exists if and only if condition \eqref{cond_c} holds.

If $c>A_1$ then $\varphi>0$ and
$\max_{x\in\mathbb{R}}\varphi(x)=c-A_1$. 
If $c<A_2$ then $\varphi<0$ and
$\min_{x\in\mathbb{R}}\varphi(x)=c-A_2$.

\vspace{.07in}

  \item[(2)] If $\sigma<0$, then
\begin{itemize}
\item if $c>A_1$ then there is a smooth solitary wave $\varphi>0$ with $\max_{x\in\mathbb{R}}\varphi(x)=c-A_1$,
and an anticusped solitary wave (the solution profile has a cusp pointing downward) $\varphi<0$ with $\min_{x\in\mathbb{R}}\varphi(x)={c/\sigma}$;
\item if $c<A_2$ then there is a smooth solitary wave $\varphi<0$ with $\min_{x\in\mathbb{R}}\varphi(x)=c-A_2$,
and a cusped solitary wave $\varphi>0$ with $\max_{x\in\mathbb{R}}\varphi(x)={c/\sigma}$;
\item if $c=A_1$ then there is an anticusped solitary wave $\varphi<0$ with $\min_{x\in\mathbb{R}}\varphi(x)={c/\sigma}$;
\item if $c=A_2$ then there is a cusped solitary wave $\varphi>0$ with $\max_{x\in\mathbb{R}}\varphi(x)={c/\sigma}$.
\end{itemize}

\vspace{.07in}

  \item[(3)] If $\sigma>1$, a solitary wave exists if and only if $c$ satisfies \eqref{cond_c}. If $c>A_1$ then $\varphi>0$. If $c<A_2$ then $\varphi<0$. Moreover,
\begin{itemize}
 \item If $A_1<c<{\sigma\over\sigma-1}A_1$, then the solitary waves are smooth and unique up to translation with
$\max_{x\in\mathbb{R}}\varphi(x)=c-A_1$;
 \item If $c={\sigma\over\sigma-1}A_1$ then the solitary wave is peaked
with $ \max_{x\in\mathbb{R}}\varphi(x)=c-A_1={c/\sigma}$;
 \item If $c>{\sigma\over\sigma-1}A_1$ then the solitary waves are cusped
with $\max_{x\in\mathbb{R}}\varphi(x)={c/\sigma}$;
 \item If ${\sigma\over\sigma-1}A_2<c<A_2$ then the solitary waves are smooth and unique up to translation with
$\min_{x\in\mathbb{R}}\varphi(x)=c-A_2$;
 \item If $c={\sigma\over\sigma-1}A_2$ then the solitary wave is antipeaked (the solution profile has a peak pointing downward)
with $\min_{x\in\mathbb{R}}\varphi(x)=c-A_2={c/\sigma}$;
 \item If $c<{\sigma\over\sigma-1}A_2$ then the solitary waves are anticusped
with $\min_{x\in\mathbb{R}}\varphi(x)={c/\sigma}$.
\end{itemize}
\end{enumerate}

Moreover, each kind of the above solitary waves is unique and even up to
translations. When $c>A_1$ or $c<A_2$, all solitary waves decay exponentially to zero at infinity.
\end{theorem}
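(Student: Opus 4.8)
The plan is to reduce the entire statement to a phase-plane analysis of the first-order relation \eqref{eqnphi_x}, $\varphi_x^2=F(\varphi)$, feeding the outcome into the local dictionary already recorded in items 1--4 above. The rational function $F$ has a double zero at $\varphi=0$, simple zeros at $\varphi=c-A_1$ and $\varphi=c-A_2$, and simple poles at $\varphi=c$ and $\varphi=c/\sigma$; by Proposition \ref{prop_sol} the pole at $\varphi=c$ is never attained. A solitary wave corresponds to an orbit that leaves $\varphi=0$, moves monotonically until $\varphi_x$ first vanishes or blows up, and returns to $0$ at the opposite infinity. The single quantity that organizes the whole classification is the coefficient at the origin,
\[
\tfrac12 F''(0)=\frac{(c-A_1)(c-A_2)}{c^2},
\]
together with the position of the pole $c/\sigma$ relative to the nearest simple zero.

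First I would settle necessity and the decay rate. Since $F(\varphi)=\tfrac12F''(0)\,\varphi^2+O(\varphi^3)$ near the origin, if $A_2<c<A_1$ then $F''(0)<0$ and $F(\varphi)<0$ for small $\varphi\neq0$, contradicting $\varphi_x^2\geq0$; hence no nontrivial orbit can approach $0$, which gives the ``only if'' direction of \eqref{cond_c} in cases (1) and (3). When \eqref{cond_c} holds strictly, $F''(0)>0$ is a genuine double zero and item 2 above yields the exponential decay \eqref{doublezero}, establishing the final decay claim.

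For sufficiency and the fine classification I would fix the branch $c>A_1$, so $\varphi>0$, the branch $c<A_2$ being entirely analogous with $A_1$ replaced by $A_2$ and the inequalities reversed. On $(0,c-A_1)$ all five factors of $F$ are positive as long as the pole $c/\sigma$ has not yet intervened, so the first obstruction is whichever of the simple zero $c-A_1$ or the pole $c/\sigma$ is met first. The comparison is purely algebraic, $c/\sigma-(c-A_1)=c\,\tfrac{1-\sigma}{\sigma}+A_1$, which is positive for every admissible $c$ when $0<\sigma\leq1$ and, for $\sigma>1$, changes sign exactly at $c=\tfrac{\sigma}{\sigma-1}A_1$. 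Reading off the three regimes: if the pole lies beyond the zero the orbit turns smoothly at a simple zero (item 1, \eqref{simplezero}) with $\max\varphi=c-A_1$; if $c/\sigma=c-A_1$ the zero and pole coincide, the ratio $(c-\varphi-A_1)/(c-\sigma\varphi)\to1/\sigma$ is finite and nonzero, so $\varphi_x^2$ tends to a positive constant and the reflected orbit develops a corner, the peakon of item 4, with $\max\varphi=c-A_1=c/\sigma$; and if the pole precedes the zero the orbit reaches the simple pole first (item 3, \eqref{polephi}) and is cusped with $\max\varphi=c/\sigma$. This reproduces cases (1) and (3). For $\sigma<0$ (case 2) the pole $c/\sigma$ sits on the branch opposite to the sign of $c$, so the $c$-sided branch is always smooth while the opposite branch runs into the pole and cusps; here I would also treat the thresholds $c=A_1$ and $c=A_2$, where the simple zero merges with the double zero at the origin into a triple zero and the decay becomes only algebraic, $\varphi\sim x^{-2}$, checking directly that $\varphi,\varphi_x\in L^2$ so the profile still lies in $H^1$ and that the pole supplies the turning point which the $\sigma=0$ equation of Theorem \ref{thm_exist_sigma=0} lacked.

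Evenness and uniqueness up to translation then follow from the reversibility $x\mapsto-x$ of \eqref{eqnphi_x} and from uniqueness for $\varphi_x=\pm\sqrt{F(\varphi)}$ away from the turning value, where the local profiles \eqref{simplezero}--\eqref{polephi} and the peakon corner pin the continuation down uniquely. The main obstacle I anticipate is not any single estimate but the disciplined sign bookkeeping: one must verify in every subcase that $F>0$ strictly between the origin and the first obstruction, so that a real orbit exists at all, and correctly identify that first obstruction, with genuine care at the two degenerate configurations, namely the zero--pole coincidence producing the peak and the zero--zero coincidence at $c=A_1,A_2$ producing algebraic decay, including the verification that the latter orbits are still admissible $H^1$ solitary waves.
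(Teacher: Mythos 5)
Your overall strategy --- reading everything off the phase-plane relation \eqref{eqnphi_x} together with the local dictionary at simple zeros, double zeros, poles, and sudden turns --- is exactly the paper's approach, and your treatment of the generic cases (the comparison of $c/\sigma$ with $c-A_1$, the peakon at the zero--pole coincidence, the cusped branches for $\sigma<0$, the exponential decay from the double zero, and evenness/uniqueness from reversibility and ODE uniqueness) matches the paper's proof step for step.

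There is, however, one genuine gap: your necessity argument only excludes $A_2<c<A_1$, because it rests on $\tfrac12F''(0)=(c-A_1)(c-A_2)/c^2<0$, which fails at the endpoints $c=A_1$ and $c=A_2$. Parts (1) and (3) assert existence \emph{if and only if} the strict condition \eqref{cond_c} holds, so for $\sigma>0$ you must also rule out solitary waves at $c=A_1$ and $c=A_2$; there the origin becomes a triple zero of $F$ and an orbit \emph{can} approach $0$ (from below when $c=A_1$), so nonexistence is not automatic. The paper devotes the first part of its proof to precisely this point: for $\sigma>0$ it shows that $F_1(\varphi)=-\varphi^3(A_1-A_2-\varphi)/[(A_1-\varphi)(A_1-\sigma\varphi)]$ is decreasing for $\varphi<0$, so the descending orbit never turns and runs off to $-\infty$, failing to lie in $H^1(\mathbb{R})$. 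In your own language the fix is immediate: for $\sigma>0$ and $c=A_1$ the admissible branch is $\varphi<0$, on which $F_1$ has no zero and no pole (the pole $A_1/\sigma$ and the remaining zeros are all $\geq0$), hence no first obstruction and no homoclinic orbit; one must still check that the orbit genuinely escapes rather than levelling off, which follows because $F_1$ is bounded below by a positive constant on $\varphi\leq-\epsilon$. (Your observation that for $\sigma<0$ these endpoint speeds give cusped waves with only algebraic decay $\sim x^{-2}$, together with the explicit check that such profiles still lie in $H^1$, is correct and is a point the paper passes over quickly.) A smaller omission of the same kind: the sign assertions ``$\varphi>0$ when $c>A_1$'' for $\sigma>0$ require you to say explicitly that the branch $\varphi<0$ carries no zero or pole of $F$ and therefore supports no solitary wave --- this is the paper's ``Case 2''.
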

\begin{proof}
 First from \eqref{eqnphi_x} and the decay of $\varphi(x)$ at infinity we know that a necessary condition for the existence
of solitary wave is that $c\geq A_1$ or $c\leq A_2$.

If $c=A_1$ then \eqref{eqnphi_x} becomes
\begin{equation}\label{eqnphi_x-A_1}
 \varphi^2_x={-\varphi^3(A_1-A_2-\varphi) \over (A_1-\varphi)(A_1-\sigma\varphi)}:=F_1(\varphi).
\end{equation}
Hence we see that $\varphi(x)<0$ near $-\infty$. Similarly as in the proof of Theorem \ref{thm_exist_sigma=0} , we can find some $x_0$ sufficiently large negative with $\varphi(x_0)=-\epsilon<0$ and $\varphi_x(x_0)<0$, and we can construct a unique local solution $\varphi(x)$ on $[x_0-L, x_0+L]$ for some $L>0$.

If $\sigma<0$, we see that
${1\over A_1-\sigma\varphi}$ is decreasing when $\varphi<0$. Together with \eqref{eqn_decreasing} we see that  $F_1(\varphi)$ decreases for $\varphi<0$.
Because $\varphi_x(x_0)<0$, $\varphi$ decreases near $x_0$, so
$F_1(\varphi)$ increases near $x_0$. Hence from
\eqref{eqnphi_x-A_1}, $\varphi_x$ decreases near $x_0$, and then
$\varphi$ and $\varphi_x$ both decreases on $[x_0-L, x_0+L]$. Since
$\sqrt{F_1(\varphi)}$ is locally Lipschitz in $\varphi$ for
$A_1/\sigma<\varphi\leq0$, we can easily continue the local solution
to $(-\infty, x_0-L]$ with $\varphi(x)\to0$ as $x\to-\infty$. As for
$x\geq x_0+L$, we can solve the initial valued problem
\begin{equation*}
\left\{\begin{array}{l}
        \psi_x=-\sqrt{F_1(\psi)},\\
        \psi(x_0+L)=\varphi(x_0+L)
       \end{array}
\right.
\end{equation*}
all the way until $\psi=A_1/\sigma$, which is a simple pole of
$F_1(\psi)$. From \eqref{polephi} and \eqref{polephi_x} we know that
we can construct an anticusped solution with a cusp singularity at
$\varphi=A_1/\sigma=c/\sigma$.

If $\sigma>0$, a direct computation shows that
\begin{equation*}
 F'_1(\varphi)<0, \quad \hbox{for }\quad \varphi<0.
\end{equation*}
Therefore the same argument indicates that $\varphi(x)\to-\infty$ as $x\to+\infty$, which fails to be in $H^1(\mathbb{R})$.
Hence in this case there is no solitary wave.

Similarly we conclude that when $c=A_2$, there is no solitary wave when $\sigma>0$. When $\sigma<0$, there is a solitary
wave with a cusp of height $c/\sigma$.

\vspace{.08in}

Now we consider $c>A_1$ or $c<A_2$. Again we will only look at
$c>A_1$. The other case $c<A_2$ can be handled in a very similar
way. From \eqref{eqnphi_x} we see that $\varphi$ can not oscillate
around zero near infinity. Let us consider the following two cases.

{\bf Case 1.} $\varphi(x)>0$ near $-\infty$. Then there is some
$x_0$ sufficiently large negative so that $\varphi(x_0)=\epsilon>0$,
with $\epsilon$ sufficiently small, and $\varphi_x(x_0)>0$.

(i) When $\sigma\leq1$, $\sqrt{F(\varphi)}$ is locally Lipschitz in
$\varphi$ for $0\leq\varphi\leq c-A_1$. Hence there is a local
solution to
\begin{equation*}
\left\{\begin{array}{l}
        \varphi_x=\sqrt{F(\varphi)},\\
        \varphi(x_0)=\epsilon
       \end{array}
\right.
\end{equation*}
on $[x_0-L, x_0+L]$ for some $L>0$. Therefore from
\eqref{simplezero} and \eqref{doublezero} we see that in this case
we can obtain a smooth solitary wave with maximum height
$\varphi=c-A_1$ and an exponential decay to zero at infinity
\begin{equation}\label{expdecay}
\varphi(x)=O\LC \exp\LC -{\sqrt{c^2+Ac-1} \over c}|x| \RC \RC \quad
\hbox{as} \quad |x|\to\infty.
\end{equation}

(ii) When $\sigma>1$, $\sqrt{F(\varphi)}$ is locally Lipschitz in
$\varphi$ for $0\leq\varphi< c/\sigma$. Thus if $c-A_1< c/\sigma$,
i.e., $c<{\sigma\over\sigma-1}A_1$, it becomes the same as (i) and
hence we obtain smooth solitary waves with exponential decay.

If $c-A_1=c/\sigma$ then the smooth solution can be constructed
until $\varphi=c-A_1=c/\sigma$. However at $\varphi=c-A_1=c/\sigma$
it can make a sudden turn and so give rise to a peak. Since
$\varphi=0$ is still a double zero of $F(\varphi)$, we still have
the exponential decay here.

Lastly if $c-A_1>c/\sigma$, then $\varphi=c/\sigma$ becomes a pole
of $F(\varphi)$. Hence from \eqref{polephi} and \eqref{polephi_x} we
see that we obtain a solitary wave with a cusp at $\varphi=c/\sigma$
and decays exponentially.

{\bf Case 2.} $\varphi(x)<0$ near $-\infty$. In this case we are
solving
\begin{equation*}
\left\{\begin{array}{l}
        \varphi_x=-\sqrt{F(\varphi)},\\
        \varphi(x_0)=-\epsilon
       \end{array}
\right.
\end{equation*}
for some $x_0$ sufficiently large negative and $\epsilon>0$
sufficiently small.

When $\sigma>0$ we see that $F'(\varphi)<0$ for $\varphi<0$. Thus in
this case there is no solitary wave.

When $\sigma<0$, $\varphi=c/\sigma<0$ is a pole of $F(\varphi)$.
Hence from similar argument as before, we obtain an anticusped
solitary wave with $\min_{x\in\mathbb{R}}=c/\sigma$, which decays
exponentially.

Finally, from the standard ODE theory and the fact that the equation
\eqref{eqnphi} is invariant under the transformations $x\mapsto x+d$
for any constant $d$, and $x\mapsto-x$, we conclude that the
solitary waves obtained above are unique and even up to
translations.



\end{proof}

\vspace{.08in}

Though there is no explicit expression for $\varphi$, and so $\eta$
in view of \eqref{soleta_phi}, as in \cite{LZ}, the effects of the
traveling speed $c$ on the function $\varphi$ can be analyzed to
provide some general description of its profile. Similarly to the
case in \cite{LZ} we have
\begin{proposition}\label{prop_phi_c}
Let $c>A_1$ or $c<A_2$, and $\varphi$ is a smooth solitary wave of
\eqref{genCH2_uandeta} as obtained in Theorem \ref{thm_solexist}.
Then $\partial_c\varphi$ decays exponentially to zero at infinity
and has at most two zeros on $\mathbb{R}$. In particular, if $A_1<c<{2\over A}$, the $\partial_c\varphi$ has exactly two zeros on $\mathbb{R}$.
\end{proposition}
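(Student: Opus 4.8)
The plan is to differentiate the profile in the speed $c$ and reduce the zero–counting of $w:=\partial_c\varphi$ to a first–order quantity. I treat the branch $c>A_1$ (the case $c<A_2$ being symmetric) and normalize the smooth wave of Theorem \ref{thm_solexist} to be even with crest at $x=0$, so that $\varphi(0)=c-A_1$ is the maximum and $\varphi_x<0$ on $(0,\infty)$. Standard smooth dependence on parameters applied to \eqref{eqnphi_x} gives that $\varphi(x,c)$ is smooth in $c$ on the smooth–wave range, and since the crest height equals $c-A_1$ we read off the boundary data $w(0)=1$, together with $w_x(0)=0$ and evenness of $w$. Because $w$ is even, a positive zero $x_0$ always comes with $-x_0$, so ``at most two zeros on $\mathbb{R}$'' is equivalent to ``at most one zero of $w$ on $(0,\infty)$'', and ``exactly two'' to exactly one positive zero.

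First I would linearize the first integral. Writing \eqref{eqnphi_x} as $\varphi_x^2=F(\varphi,c)$ and differentiating in $c$ gives $2\varphi_x w_x=F_\varphi w+F_c$, while differentiating in $x$ gives $\varphi_{xx}=\tfrac12 F_\varphi$. Eliminating $F_\varphi$ yields the key reduction
\[
\frac{d}{dx}\LC\frac{w}{\varphi_x}\RC=\frac{F_c(\varphi,c)}{2F(\varphi,c)},\qquad x>0.
\]
Set $g:=w/\varphi_x$. On $(0,\infty)$ we have $\varphi_x\neq0$, so the zeros of $w$ coincide with those of $g$; moreover $F=\varphi_x^2>0$, so $\mathrm{sign}\,g'=\mathrm{sign}\,F_c$. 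Thus everything is governed by the sign of $F_c$ along the orbit and by the endpoint behaviour of $g$.

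Next I would pin down that endpoint behaviour. Near the crest \eqref{simplezero} gives $\varphi_x\sim -2ax$ with $a>0$, while $w(0)=1$, so $g(0^+)=-\infty$. Near infinity I use the double–zero structure \eqref{doublezero}, \eqref{expdecay}: with $\lambda=\sqrt{c^2+Ac-1}/c$ one has $\varphi\sim\alpha e^{-\lambda x}$, and linearizing \eqref{eqnphi} about $\varphi$ produces, to leading order, $w_{xx}-\lambda^2 w\sim(\mathrm{const})\,\alpha e^{-\lambda x}$. The forcing is resonant, so $w\sim(\beta+\gamma x)e^{-\lambda x}$; in particular $w$ decays exponentially, which is the first assertion. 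A short computation identifies $\gamma$ as a nonzero multiple of $2-Ac$ when $c\neq 2/A$, so $g$ grows linearly with $g(\infty)=+\infty$ if $c<2/A$ and $g(\infty)=-\infty$ if $c>2/A$. Finally, computing $F_c$ from \eqref{eqnphi_x} with $A_1+A_2=-A$, $A_1A_2=-1$ from \eqref{def_A_12}, one finds that $F_c$ is positive at the crest, has sign $\mathrm{sign}(2-Ac)$ at infinity, and changes sign at most once on $0<\varphi<c-A_1$.

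Assembling these facts settles the generic regime. If $A_1<c<2/A$ then both endpoint signs of $F_c$ are positive, hence $F_c>0$ throughout, $g'>0$, and $g$ increases strictly from $-\infty$ to $+\infty$: it has exactly one zero on $(0,\infty)$, i.e. $w$ has exactly two zeros on $\mathbb{R}$. If $c>2/A$ then $F_c$ is positive near the crest and negative near infinity, so $g$ is unimodal with $g(0^+)=g(\infty)=-\infty$; here ``at most two zeros'' reduces to showing the single maximum of $g$ stays $\le0$, i.e. $\partial_c\varphi>0$ with no zeros at all. This is the main obstacle: since $F$ vanishes to first order at the crest and to second order at infinity, the integral of $g'=F_c/2F$ diverges at \emph{both} ends, so one cannot simply integrate to bound $g_{\max}$. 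I would overcome it by recasting $g$ as minus the $c$–derivative of the travel–time integral $I(\varphi,c)=\int_\varphi^{c-A_1}F(s,c)^{-1/2}\,ds$, for which $g=-I_c$ along the orbit, and proving $I_c>0$ when $c>2/A$ by a regularized monotonicity argument; equivalently, one may apply a maximum principle to the self–adjoint equation $\LB(c-\sigma\varphi)w_x\RB_x+Qw=R$ satisfied by $w$, whose homogeneous solution $\varphi_x$ is single–signed on $(0,\infty)$. Uniqueness and evenness from Theorem \ref{thm_solexist} then transfer the count to all of $\mathbb{R}$.
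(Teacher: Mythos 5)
Your core reduction is exactly the paper's: differentiating the first integral \eqref{eqnphi_x} in $c$ gives $2\varphi_x w_x=F_\varphi w+F_c$, so at a zero of $w=\partial_c\varphi$ the sign of $\varphi_x w_x$ is that of $F_c$; your quotient $g=w/\varphi_x$ with $g'=F_c/(2F)$ is the same computation in Wronskian form. Your handling of the regime $A_1<c<2/A$ (endpoint signs of $g$ read off from the $c$-monotonicity of the decay rate $\sqrt{c^2+Ac-1}/c$, hence exactly one positive zero and, by evenness, exactly two on $\mathbb{R}$) and your derivation of exponential decay of $w$ from \eqref{expdecay} also match the paper's argument.

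The genuine gap is the regime $c\ge 2/A$ (and its mirror on the branch $c<A_2$): there you reduce ``at most two zeros'' to showing $\partial_c\varphi\ge0$, call this ``the main obstacle'', and offer two candidate strategies (monotonicity of a regularized travel-time integral; a maximum principle for the linearized equation) without executing either, so on that range of speeds the statement is not actually proved. The paper closes this case in one line by asserting that at any zero $x_0$ of $w$ one has $2\varphi_x w_x=\frac{\varphi^2}{c-\sigma\varphi}\LB 1+\frac{1}{(c-\varphi)^2}+\frac{\varphi_x^2}{\varphi^2}\RB>0$ for every admissible $c$. It is worth noting that your (correct) observation that $F_c$ has the sign of $2-Ac$ as $\varphi\to0$ forces the last term of that bracket to carry a minus sign, so you have in fact put your finger on a soft spot of the published argument rather than merely missed a trick --- but identifying the difficulty is not the same as resolving it. A secondary unverified step: the claim that $F_c$ changes sign at most once on $0<\varphi<c-A_1$, which you need even in the clean case $A_1<c<2/A$, is asserted without proof; the sign of $F_c$ is that of a cubic in $c-\varphi$ whose root count on $(A_1,c)$ depends on $\sigma$, so either compute it or bypass it (for $\sigma\le1$ one gets $F_c>0$ directly from $c-\sigma\varphi\ge c-\varphi$ together with $c-\varphi<c<2/A$).
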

\begin{proof}
Again we only discuss the case $c>A_1$. The other case $c<A_2$ can
be handled in the same way.

Denote $\omega=\partial_c\varphi$. The exponential decay of $\omega$
can be inferred from \eqref{expdecay}. Since $\varphi$ is unique and
even up to translations, we may assume that $\varphi(0)=c-A_1$.
Hence $\omega(0)=1$ and $\omega$ is even. Assume $\omega(x_0)=0$ for some
$x_0>0$. Differentiating \eqref{eqnphi_x} with respect to $c$ and
evaluating at $x=x_0$ we get
\begin{align*}
2\varphi_x\omega_x&={\varphi^2\over c-\sigma\varphi}\LB 1+{1\over
(c-\varphi)^2}+{(c-\varphi)^2+A(c-\varphi)-1 \over
(c-\varphi)(c-\sigma\varphi)} \RB\\
&={\varphi^2\over c-\sigma\varphi}\LB 1+{1\over
(c-\varphi)^2}+{\varphi^2_x \over \varphi^2} \RB>0,
\end{align*}
since $c-\sigma\varphi>0$. Because $\varphi_x(x_0)<0$, we see from
the above inequality that $\omega_x(x_0)<0$. So $\omega$ is strictly
decreasing near $x_0$. It is then deduced from the continuity of
$\omega$ that it has at most two zeros on $\mathbb{R}$.

If $A_1<c<{2\over A}$, then from the decay estimate \eqref{expdecay}
we see that $\varphi$ decays faster at infinity as $c$ gets larger,
since
\begin{equation*}
\partial_c\LC {\sqrt{c^2+Ac-1}\over c} \RC={2-Ac \over
2c^2\sqrt{c^2+Ac-1}}>0.
\end{equation*}
Hence $\omega(x)<0$ at infinity. Therefore $\omega$ has at least two
zeros.  Thus combining the above argument we proved that $\omega(x)$
has exactly two zeros $\pm x_0$ in this case.
\end{proof}


Next we try to find an implicit formula for the peaked solitary waves. Let us consider only the case $c>A_1$. Then from Theorem \ref{thm_solexist} we know that peaked solitary waves exist only when $c={\sigma\over\sigma-1}A_1$. In this case we have
\begin{equation*}
\varphi^2_x={\varphi^2(c-A_2-\varphi)\over c-\varphi}.
\end{equation*}
Since $\varphi$ is positive, even with respect to some $x_0$ and decreasing on $(x_0,\infty)$, so for $x>x_0$ we have
\begin{equation*}
\varphi_x=-\varphi\sqrt{1-{A_2\over c-\varphi}}.
\end{equation*}
Hence from the separation of variables we get
\begin{equation*}
-(x-x_0)=\int^\varphi_{c-A_1}{dt\over t\sqrt{1-{A_2\over c-t}}}.
\end{equation*}
Let $w=1-{A_2\over c-t}$ the above becomes
\begin{align*}
-(x-x_0)&=\int^{1-{A_2\over c-\varphi}}_{1-{A_2\over A_1}} {-A_2\over \LB cw-(c-A_2) \RB(w-1)\sqrt{w}}\ dw\\
&=\int^{1-{A_2\over c-\varphi}}_{1-{A_2\over A_1}} {1\over \sqrt{w}}\LB {c\over cw-(c-A_2)}-{1\over w-1} \RB\ dw\\
&=\LC \sqrt{c\over c-A_2}\ln \LV {\sqrt{cw}-\sqrt{c-A_2}\over \sqrt{cw}+\sqrt{c-A_2}} \RV-\ln \LV {\sqrt{w}-1\over \sqrt{w}+1} \RV\RC \Big|^{1-{A_2\over c-\varphi}}_{1-{A_2\over A_1}}.
\end{align*}
Therefore we obtain an implicit formula for the peaked solitary waves.
\begin{equation}\label{peakonformula}
-|x-x_0|=\LC \sqrt{c\over c-A_2}\ln \LV {\sqrt{cw}-\sqrt{c-A_2}\over \sqrt{cw}+\sqrt{c-A_2}} \RV-\ln \LV {\sqrt{w}-1\over \sqrt{w}+1} \RV\RC \Big|^{1-{A_2\over c-\varphi}}_{w=1-{A_2\over A_1}}.
\end{equation}

Below is a figure of such a peaked solitary wave with $x_0=0$, $A=0$, and $c=\sigma=2$.

\vspace{.1in}

\includegraphics[width=5.0in]{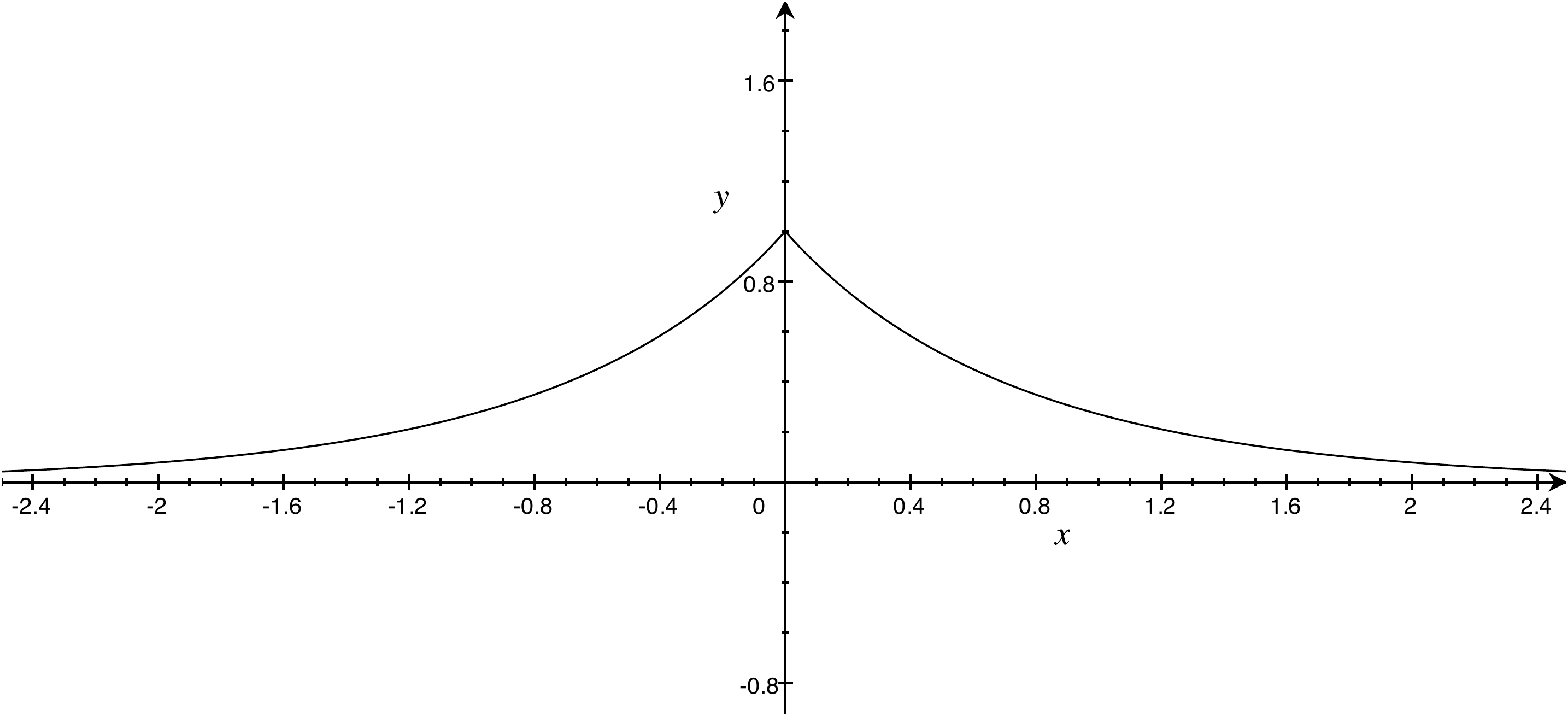}

\vspace{.1in}


\section{Stability}\label{sec_stab}


In this section, we want to discuss the stability of the smooth solitary waves of \eqref{genCH2_uandeta}. For fixed $c$ and
$\epsilon>0$, we define the ``$\epsilon$-tube'' of a solitary wave $\vec{\varphi}_c$ to be
\begin{equation}\label{def_tube}
 U_{\epsilon}=\{ \vec{u}\in X: \ \inf_{s\in\mathbb{R}}\|\vec{u}-\vec{\varphi}_c(\cdot-s)\|_{X}<\epsilon \}.
\end{equation}

According to Theorem \ref{thm_solexist}, the solitary waves for \eqref{genCH2_uandeta}
travel with speeds proportional to their maximal
heights. This consideration suggests that the appropriate
notion of stability for the solitary waves is
orbital stability: a wave starting close to a solitary
wave should stay close, as long as it exists, to some
translate of the solitary wave. The orbit of a solitary
wave is the set of all its translates.

Let us now discuss the appropriate notion of stability for the solitary waves of \eqref{genCH2_uandeta}.
\begin{definition}\label{def_stability}
 The solitary wave $\vec{\varphi}_c$ of \eqref{genCH2_uandeta} is stable in $X$ if for every $\epsilon>0$, there
exists a $\delta>0$ such that for any $\vec{u}_0\in U_{\delta}$, if $\vec{u}\in C([0,T); X)$ for some $0<T\leq \infty$
is a solution to \eqref{genCH2_uandeta} with $\vec{u}(0)=\vec{u}_0$, then $\vec{u}(t)\in U_{\epsilon}$ for all $t\in[0,T)$.
Otherwise the solitary wave $\vec{\varphi}_c$ is said to be unstable in $X$.
\end{definition}
As is discussed in \cite{CLi}, some solutions of \eqref{genCH2_uandeta} are defined globally in time
(e.g. for $0<\sigma<2$ and $\inf_{x\in\mathbb{R}}\eta_0>-1$, or the solitary waves constructed in Section
\ref{sec_solwaves}) while other waves break in finite time. Note that by stability we mean that even
if a solution which is initially close to a solitary wave blows up in a finite time, it will stay close
to some translate of the solitary wave up to the breaking time.

Our main theorem in this section is the following.

\begin{theorem}\label{thm_stability}
 Let $\sigma\leq 1$. All smooth solitary waves of \eqref{genCH2_uandeta} are stable.
\end{theorem}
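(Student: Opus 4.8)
The plan is to apply the abstract orbital-stability theory of Grillakis, Shatah and Strauss \cite{GSS}. System \eqref{genCH2_uandeta} is Hamiltonian, $\vec{u}_t=JF'(\vec{u})$ as in \eqref{absF}, with the two conserved functionals $E$ and $F$ from \eqref{consE}--\eqref{consF}; moreover $E$ generates the translation symmetry, since a direct computation gives $JE'(\vec{u})=-\vec{u}_x$. Writing out $cE'(\vec{\varphi}_c)-F'(\vec{\varphi}_c)$ and comparing with \eqref{soleqn} shows that the solitary-wave system is precisely the statement that $\vec{\varphi}_c$ is a critical point of the conserved Lyapunov functional $\mathcal{V}_c:=cE-F$, i.e. $\mathcal{V}_c'(\vec{\varphi}_c)=0$, with the profile relation \eqref{soleta_phi} $\eta=\varphi/(c-\varphi)$ appearing as the second component. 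The abstract theorem reduces stability to: (i) the Hessian $H_c:=\mathcal{V}_c''(\vec{\varphi}_c)$ has exactly one negative (simple) eigenvalue, a one-dimensional kernel spanned by $\partial_x\vec{\varphi}_c$, and the rest of its spectrum positive and bounded away from $0$; and (ii) the scalar function $d(c):=\mathcal{V}_c(\vec{\varphi}_c)$ satisfies $d''(c)>0$. The flow hypotheses are furnished by Theorem \ref{thm_localwellposedness} and Definition \ref{def_solution}, and the $C^1$ dependence $c\mapsto\vec{\varphi}_c$ by the ODE construction of Theorem \ref{thm_solexist} together with the decay estimates of Proposition \ref{prop_phi_c}. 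I treat $c>A_1$ in detail; the case $c<A_2$ is handled identically after reversing signs.

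The heart of the argument, and the step I expect to be the main obstacle, is the spectral analysis of $H_c$, a $2\times2$ matrix operator on $X=H^1\times L^2$. Its lower-right entry is multiplication by $c-\varphi$, and by Theorem \ref{thm_solexist} we have $c-\varphi\geq A_1>0$, so that block is positive and boundedly invertible. This is the structural feature that makes the problem tractable: I eliminate the $\eta$-component by a Schur-complement (inertia) reduction, minimizing $\langle H_c(v,\zeta),(v,\zeta)\rangle$ over $\zeta$ for fixed $v$ — which introduces no negative directions because $c-\varphi>0$ — to obtain the scalar Sturm--Liouville operator
\[
 S_c v=-\LB (c-\sigma\varphi)v_x \RB_x+\LC \sigma\varphi_{xx}-3\varphi+A+c-\frac{(1+\eta)^2}{c-\varphi} \RC v.
\]
Here the hypothesis $\sigma\leq1$ is used in an essential way: it forces the leading coefficient $c-\sigma\varphi\geq c-\varphi>0$ to be strictly positive, so $S_c$ is a genuine Sturm--Liouville operator, bounded below, with essential spectrum $[\,c+A-\tfrac1c,\infty)$; the bound $c+A-\tfrac1c>0$ follows from $c>A_1$ because $c^2+Ac-1>A_1^2+AA_1-1=0$. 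By the inertia identity $n(H_c)=n(S_c)$, and $\ker H_c$ is the image of $\ker S_c$ under $v\mapsto\LC v,\tfrac{1+\eta}{c-\varphi}v \RC$. Differentiating the profile equation in $x$ gives $\partial_x\vec{\varphi}_c\in\ker H_c$, whose first component $\varphi_x$ lies in $\ker S_c$; since the smooth wave is even with a single crest (Theorem \ref{thm_solexist}), $\varphi_x$ has exactly one zero and decays exponentially by \eqref{expdecay}. Sturm oscillation theory then identifies $\varphi_x$ as the first excited eigenfunction, so $0$ is the second eigenvalue of $S_c$: it is simple, there is exactly one eigenvalue below it, and the remainder of the spectrum is positive. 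This is exactly the spectral picture required by \cite{GSS}.

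It remains to verify the convexity condition $d''(c)>0$. Using the critical-point identity $\mathcal{V}_c'(\vec{\varphi}_c)=0$, the term $\langle \mathcal{V}_c'(\vec{\varphi}_c),\partial_c\vec{\varphi}_c\rangle$ drops out and one finds $d'(c)=E(\vec{\varphi}_c)$, so $d''(c)=\tfrac{d}{dc}E(\vec{\varphi}_c)$, and it suffices to show $c\mapsto E(\vec{\varphi}_c)$ is strictly increasing. Using $\eta=\varphi/(c-\varphi)$ and $\varphi_x^2=F(\varphi)$ from \eqref{eqnphi_x}, I will rewrite $E(\vec{\varphi}_c)$ as a one-dimensional integral over the range of $\varphi$ via the substitution $dx=d\varphi/\sqrt{F(\varphi)}$ and differentiate in $c$; the exponential decay and the two-node structure of $\omega=\partial_c\varphi$ established in Proposition \ref{prop_phi_c} supply the control needed to justify differentiation across the moving singular endpoint $\varphi=c-A_1$ (a simple zero of $F$) and to fix the sign. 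Once $d''(c)>0$ is in hand, the spectral count $n(H_c)=1$ with simple kernel, together with the convexity of $d$, places $\vec{\varphi}_c$ in the stable regime of the Grillakis--Shatah--Strauss theorem \cite{GSS}, yielding orbital stability in $X$ and completing the proof. I expect the spectral reduction of the coupled system to the scalar operator $S_c$ and the oscillation count to be the conceptual crux, while the convexity step, though requiring careful handling of the endpoint singularity, is the more routine analysis.
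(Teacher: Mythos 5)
Your proposal is correct and follows essentially the same route as the paper: the Grillakis--Shatah--Strauss framework with $H_c=cE''-F''$, elimination of the $\eta$-component via the positive block $c-\varphi$ (your Schur complement is exactly the paper's completion of squares $Q_c=Q_c^{(1)}+G$ with $G\geq0$), reduction to the Sturm--Liouville operator $\mathcal{K}_c$ whose kernel element $\varphi_x$ has one zero, and the convexity $d''(c)>0$ obtained from $d'(c)=E(\vec{\varphi}_c)$ by the substitution $y=c-\varphi$. The only small inaccuracy is attributing an essential role to $\sigma\leq1$ in the spectral step (positivity of $c-\sigma\varphi$ holds for every smooth wave by construction, and the paper's spectral lemma needs no such restriction); the hypothesis $\sigma\leq1$ is genuinely needed only to fix the sign of the integrand in the $d''(c)$ computation.
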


First from Theorem \ref{thm_solexist} we know that smooth solitary waves exist only when $c>A_1$ or $c<A_2$. For
convenience we assume $c>A_1$.

The special case $\sigma=1$ is settled in \cite{LZ}. We will show that for general $\sigma< 1$, the problem can be analyzed using the method provided by Grillakis, Shatah, and Strauss \cite{GSS}.

Let us now make some functional analysis setup. Recalling the functionals $E$ and $F$ are well-defined on $X$, we may
compute their Fr\'{e}chet derivatives as follows
\begin{equation*}
 \left\{\begin{array}{l}
         E'_u=-u_{xx}+u\\
         E'_{\eta}=\eta,
        \end{array}
 \right. \quad
 \left\{\begin{array}{l}
         F'_u={3\over2}u^2-{\sigma\over2}u^2_x-\sigma uu_{xx}+\eta+{1\over2}\eta^2-Au\\
         F'_{\eta}=u+u\eta.
        \end{array}
 \right.
\end{equation*}
Using these notation we see that a solitary wave $\vec{\varphi}_c$ of \eqref{genCH2_uandeta} satisfies
\begin{equation}\label{eqnsolitary}
 cE'(\vec{\varphi}_c)-F'(\vec{\varphi}_c)=0.
\end{equation}
Denote
\begin{equation*}
 L_c=-\partial_x\LC (c-\sigma\varphi)\partial_x \RC-3\varphi+\sigma\varphi_{xx}+c+A.
\end{equation*}
Then the linearized operator $H_c: X\to X^*$ of $cE'-F'$ at $\vec{\varphi}_c$ can be computed as
\begin{equation*}
 H_c=cE''(\vec{\varphi}_c)-F''(\vec{\varphi}_c)
=\LC \begin{array}{cc}
      L_c & -(1+\eta)\\
      -(1+\eta) & c-\varphi
     \end{array}
  \RC.
\end{equation*}
Using \eqref{soleta_phi} we have
\begin{equation}\label{Hessian}
 H_c=\LC \begin{array}{cc}
      L_c & \displaystyle  -{c\over c-\varphi}\\\\
\displaystyle      -{c\over c-\varphi} & c-\varphi
     \end{array}
  \RC.
\end{equation}
We see easily that $H_c$ is self-adjoint and bounded from below, i.e., $H_c\geq aI$ for some constant $a$ and $I$ is the identity operator.

The next lemma states some spectral properties about $H_c$.
\begin{lemma}\label{lm_specH_c}
Let $c>A_1$ and $\vec{\varphi}_c$ be a smooth solitary wave of \eqref{genCH2_uandeta}. The spectrum of $H_c$
satisfies the following properties.
\begin{enumerate}
\item[(1)] The essential spectrum of $H_c$ is positive and bounded away from zero.
\item[(2)] The kernel of $H_c$ is spanned by $\partial_x\vec{\varphi}_c$.
\item[(3)] $H_c$ has exactly one negative simple eigenvalue $\lambda_1$ corresponding to eigenfunction $\vec{\chi}=(\chi, \mu)$.
\end{enumerate}
\end{lemma}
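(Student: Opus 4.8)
The plan is to establish the three spectral properties of the self-adjoint operator $H_c$ on $X = H^1(\mathbb{R}) \times L^2(\mathbb{R})$ by exploiting the block structure in \eqref{Hessian}. I would first reduce the full $2\times 2$ operator to a scalar Schr\"odinger-type operator via a Schur complement. Since $c - \varphi > 0$ is bounded away from zero (by Proposition \ref{prop_sol}, and because $\varphi$ is bounded with $\max\varphi = c - A_1 < c$ when $c > A_1$), the second diagonal entry is invertible, so the quadratic form $\langle H_c \vec{u}, \vec{u}\rangle$ can be diagonalized. Completing the square in the $\eta$-variable, for $\vec{u} = (y,\eta)$ one obtains
\begin{equation*}
\langle H_c \vec{u}, \vec{u}\rangle = \big\langle \big(L_c - \tfrac{c^2}{(c-\varphi)^3}\big) y, \, y \big\rangle + \int_{\mathbb{R}} (c-\varphi)\Big(\eta - \tfrac{c}{(c-\varphi)^2} y\Big)^2\, dx.
\end{equation*}
The second term is nonnegative and vanishes on the graph $\eta = \tfrac{c}{(c-\varphi)^2} y$, so the spectral analysis of $H_c$ reduces to that of the scalar operator $\mathcal{L}_c := L_c - \tfrac{c^2}{(c-\varphi)^3}$ acting on $H^1(\mathbb{R})$, which is a Sturm-Liouville operator $-\partial_x((c-\sigma\varphi)\partial_x) + V(x)$ with potential $V = -3\varphi + \sigma\varphi_{xx} + c + A - \tfrac{c^2}{(c-\varphi)^3}$. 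Here the hypothesis $\sigma \le 1$ enters: since $0 < \sigma\varphi \le \sigma(c-A_1) < c$, the leading coefficient $c - \sigma\varphi$ is strictly positive and bounded away from zero, so $\mathcal{L}_c$ is uniformly elliptic and the Sturm-Liouville theory applies.

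For item (1), the essential spectrum: because $\varphi, \eta \to 0$ exponentially at infinity (Theorem \ref{thm_solexist}), the coefficients of $\mathcal{L}_c$ converge to constants, namely the leading coefficient to $c$ and the zeroth-order coefficient to $c + A - 1 = c^2 + Ac - 1)/\ldots$; more precisely, by Weyl's theorem on the invariance of essential spectrum under relative compact perturbations, $\sigma_{ess}(\mathcal{L}_c)$ equals that of the constant-coefficient limit operator $-c\,\partial_x^2 + (c + A - 1)$. This limit operator has spectrum $[c+A-1, \infty)$, and since $c > A_1$ means $c^2 + Ac - 1 > 0$, i.e. $c + A - 1 > 0$ after using the relation defining $A_1$, the essential spectrum is positive and bounded away from zero. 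For item (2), the kernel: differentiating the traveling-wave equation \eqref{eqnsolitary} in $x$ shows $\partial_x \vec{\varphi}_c \in \ker H_c$; conversely, $\ker \mathcal{L}_c$ is at most one-dimensional by standard ODE uniqueness (two linearly independent decaying solutions of a second-order ODE cannot both lie in $L^2$ unless proportional), and $\varphi_x$ spans it. For item (3), the negative eigenvalue: I would use the fact that $\varphi_x$ (the kernel element) has exactly one zero—since $\varphi$ is even with a single crest, $\varphi_x$ vanishes only at the crest—so by Sturm oscillation theory $\varphi_x$ is the \emph{second} eigenfunction (the first eigenfunction has no zeros). Hence there is exactly one eigenvalue $\lambda_1$ below the zero eigenvalue, and it is simple.

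The main obstacle I anticipate is item (3), pinning down that $H_c$ has \emph{exactly} one negative eigenvalue rather than merely at least one. Establishing the lower bound (at least one negative direction) is comparatively easy—one exhibits a test function, for instance $\partial_c \vec{\varphi}_c$ or a rescaling direction, on which the quadratic form is negative, using the conservation-law structure. The upper bound (at most one) is the delicate part: it requires the full force of Sturm oscillation theory for the scalar operator $\mathcal{L}_c$, which in turn depends on knowing precisely the nodal structure of the kernel eigenfunction $\varphi_x$. This is where the profile analysis from Theorem \ref{thm_solexist} and Proposition \ref{prop_phi_c}—that smooth solitary waves have a single crest and are even—becomes essential, since it guarantees $\varphi_x$ has exactly one sign change. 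One must also verify carefully that the reduction via Schur complement preserves eigenvalue \emph{counts} (i.e. that the quadratic form splitting, which only obviously handles signs, actually yields a bijective correspondence of negative/zero/positive subspaces between $H_c$ and $\mathcal{L}_c$); this is a standard but technical point requiring that the map $(y,\eta) \mapsto (y, \eta - \tfrac{c}{(c-\varphi)^2}y)$ be a bounded isomorphism of $X$, which it is since $\tfrac{c}{(c-\varphi)^2}$ is bounded.
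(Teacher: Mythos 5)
Your reduction of $H_c$ to the scalar Sturm--Liouville operator $\mathcal{L}_c = L_c - c^2/(c-\varphi)^3$ by completing the square in the $\eta$-variable, the identification of $\varphi_x$ as a kernel element with exactly one zero, and the use of oscillation theory plus the min--max principle (testing against the ground state of the scalar operator) to get exactly one simple negative eigenvalue --- this is precisely the paper's argument for parts (2) and (3). The paper calls your $\mathcal{L}_c$ by the name $\mathcal{K}_c$ and additionally performs a Liouville substitution to put it in Schr\"odinger form before invoking the oscillation theorem, a step you may skip if you quote Sturm--Liouville theory in divergence form. One small correction: the lemma does not assume $\sigma\le 1$; smoothness of the wave already forces $c-\sigma\varphi$ to be bounded away from zero (this is built into Theorem \ref{thm_solexist}, and the paper's remark notes the lemma holds for all smooth solitary waves), so no sign hypothesis on $\sigma$ should enter.

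The one place where your route genuinely diverges --- and where there is a gap as written --- is part (1). The congruence $(y,\eta)\mapsto\bigl(y,\eta-\tfrac{c}{(c-\varphi)^2}y\bigr)$ preserves the inertia of the quadratic form (hence the counts of negative and null directions), but congruence does not preserve spectra, so you cannot read off $\sigma_{ess}(H_c)$ from $\sigma_{ess}(\mathcal{L}_c)$ alone. The paper instead applies Weyl's theorem directly to the $2\times2$ operator, comparing $H_c$ with its constant-coefficient asymptotic operator $O_\infty$ and proving $O_\infty\ge\delta c\,I$ by an elementary Cauchy--Schwarz estimate using $c^2+Ac-1>0$. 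Your scalar computation can be salvaged --- for instance via the characterization that $\inf\sigma_{ess}>0$ if and only if the form is coercive on a closed subspace of finite codimension, a property that is invariant under bounded congruence, combined with the lower bound $c-\varphi\ge A_1>0$ for the second block --- but this needs to be said explicitly. Also note that the limiting zeroth-order coefficient of $\mathcal{L}_c$ is $c+A-1/c=(c^2+Ac-1)/c$, not $c+A-1$; the conclusion that it is positive when $c>A_1$ is unaffected.
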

\begin{proof}
The proof is inspired  by Lemma 3.2 in \cite{LZ}. The details are as follows.
\begin{enumerate}
\item[(1)] Since $\varphi, \varphi_x$ and $\varphi_{xx}$ all decay exponentially at infinity, it follows from Weyl's essential spectrum theorem that the essential spectrum of $H_c$ is the same as that of its asymptotic operator $O_\infty$ as $|x|\to\infty$, where
\begin{equation*}
O_\infty=\LC \begin{array}{cc}
             -c\partial_{xx}+c+A & \ -1\\
             -1 & c
                       \end{array}
                \RC.
\end{equation*}
When $c>A_1$, we have $c^2+Ac-1>0$. Hence there is some constant $\delta=\delta(c,A)\in(0,1)$ such that
\begin{align*}
2|\psi\omega|&\leq 2(1-\delta)\sqrt{c(c+A)}|\psi\omega|\\
&\leq (1-\delta)\LB (c+A)\psi^2+c\omega^2 \RB
\end{align*}
for any $\vec{\psi}=(\psi,\omega)\in H^1(\mathbb{R})\times L^2(\mathbb{R})$. Therefore
\begin{align*}
(O_\infty\vec{\psi},\vec{\psi}^T)_{L^2\times L^2}&=\Int\ \LB c\psi_x^2+(c+A)\psi^2-2\psi\omega+c\omega^2 \RB\ dx\\
&\geq \Int\ \LB \delta(c+A)\psi^2+\delta c\omega^2 \RB\ dx\geq \delta c\|\vec{\psi}\|^2_{L^2\times L^2}.
\end{align*}
Hence $O_\infty$ is positive when $c>A_1$, and then the essential spectrum of $H_c$ is $[a_0,\infty)$ for some $a_0>0$ and there are finitely many eigenvalues located to the left of $a_0$.
\item[(2)] If $\vec{\psi}=(\psi,\omega)$ is an eigenfunction of $H_c$ corresponding to the eigenvalue zero, then
\begin{align*}
\displaystyle -\partial_x\LC (c-\sigma\varphi)\psi_x \RC+(-3\varphi+\sigma\varphi_{xx}+c+A)\psi-{c\over c-\varphi}\omega&=0,\\
\displaystyle -{c\over c-\varphi}\psi +(c-\varphi)\omega&=0.
\end{align*}
From the second equation we get $\omega={c\psi\over (c-\varphi)^2}$. Hence the first equation can be expressed as
a zero eigenvalue problem for $\mathcal{K}_c: H^1\to H^{-1}$:
\begin{equation}\label{eigenprob}
\mathcal{K}_c\psi:=-\partial_x\LC (c-\sigma\varphi)\psi_x \RC+\LC-3\varphi+\sigma\varphi_{xx}+c+A-{c^2\over (c-\psi)^3}\RC\psi=0.
\end{equation}

We now use the fact that $\varphi(x), \varphi_x(x), \varphi_{xx}(x) \to 0$ exponentially fast as $|x|\to\infty$ while
$c-\sigma\varphi$ is positive and bounded away from zero when $\varphi$ is smooth. Similar to \cite{CW4},
it follows that the spectral equation $\mathcal{K}_c\psi=0$ can be transformed by the Liouville substitution
\begin{equation*}
z=\int^x_0 {dy\over\sqrt{c-\sigma\varphi(y)}}, \quad \theta(z)=(c-\sigma\varphi(x))^{1/4}\psi(x),
\end{equation*}
into
\begin{equation*}
\mathcal{L}_c\theta(z):=\LC -\partial_z^2+q_c(z)+c+A-{1\over c} \RC\theta(z)=0,
\end{equation*}
where
\begin{equation*}
q_c(z)=-3\varphi(x)+{3\sigma\over4}\varphi_{xx}(x)-{c^2\over(c-\varphi(x))^3}+{1\over c}-{\sigma^2\varphi^2_x(x)\over 8(c-\sigma\varphi(x))}.
\end{equation*}
Since $q_c\to0$ exponentially as $|z|\to\infty$, we deduce that $\mathcal{L}_c: H^1(\mathbb{R})\to H^{-1}(\mathbb{R})$ is
self-adjoint with essential spectrum $[c+A-{1\over c},\infty)$. Because $c>A_1$, we know $c+A-{1\over c}>0$. We may have
finitely many eigenvalues of $\mathcal{L}_c$ located to the left of $c+A-{1\over  c}$. The $n$th eigenvalue (in increasing
order) has, up to a constant multiple, a unique eigenfunction with precisely $(n-1)$ zeros (see for example, \cite{DS} for details).  Thus the operator $\mathcal{K}_c$ has the same spectral properties.

Note that $\eta_x={c\varphi_x\over(c-\varphi)^2}$ and \eqref{soleqnfull} imply that $\mathcal{K}_c(\varphi_x)=0$.
Since $\varphi_x$ has exactly one zero. Therefore the zero eigenvalue of $\mathcal{K}_c$ is simple, and there is
exactly one negative eigenvalue while the rest of the spectrum is positive and bounded away from zero. Hence the
zero eigenvalue of $H_c$ is simple and the kernel is spanned by $\vec{\varphi}_x$.
\item[(3)] The operator $H_c$ is related to a quadratic form $Q_c(\vec{\psi})$ with $\vec{\psi}=(\psi,\omega)\in X$,
which is defined as the coefficient of $\epsilon^2$ in the Taylor's expansion of $cE(\vec{\varphi}_c+\epsilon\vec{\psi})
-F(\vec{\varphi}_c+\epsilon\vec{\psi})$ and is given by
\begin{align*}
Q_c(\vec{\psi})&={1\over2}\Int\ \Big[ (c-\sigma\varphi)\psi^2_x+(-3\varphi+\sigma\varphi_{xx}+c+A)\psi^2-{2c\over c-\varphi}\psi\omega\\
 &\quad \quad +(c-\varphi)\omega^2 \Big] \ dx\\
 &={1\over2}\Int\ \LB (c-\sigma\varphi)\psi^2_x+\LC-3\varphi+\sigma\varphi_{xx}+c+A-{c^2\over(c-\varphi)^3} \RC\psi^2\RB \ dx +\\
 &\quad\ {1\over2}\Int\ \LB (c-\varphi)\LC {c\over(c-\varphi)^2}\psi-\omega \RC^2 \RB \ dx\\
 &:=Q^{(1)}_c(\psi)+G(\vec{\psi}).
\end{align*}
Note that the quadratic form $Q^{(1)}_c(\psi)$ is related to the operator $\mathcal{K}_c$ and $G(\vec{\psi})$ is nonnegative.

Let $f$ be a nontrivial eigenfunction corresponding to the unique negative eigenvalue of $\mathcal{K}_c$. Then
\begin{equation*}
Q_c\LC f, {c\over(c-\varphi)^2}f \RC=Q^{(1)}_c(f)<0.
\end{equation*}
So $H_c$ has a negative eigenvalue, say, $\lambda_1<0$. Applying the min-max characterization of eigenvalues to $H_c$ yields
\begin{equation*}
\lambda_2=\max_{\vec{\psi}\in X}\min_{{\small\begin{array}{c}\vec{\omega}\in X\backslash\{0\} \\(\vec{\psi},\vec{\omega})=0\end{array}}} {Q_c(\vec{\omega})\over \|\omega\|^2_{X}}.
\end{equation*}
Choosing $\vec{\psi}=(f,0)$ leads to
\begin{equation*}
\lambda_2\geq \min_{{\small\begin{array}{c}\vec{\omega}\in X\backslash\{0\} \\((f,0),\vec{\omega})=0\end{array}}} {Q_c(\vec{\omega})\over \|\omega\|^2_{X}} = \min_{{\small\begin{array}{c}(g,h)\in X\backslash\{0\} \\((f,0),(g,h))=0\end{array}}} {Q^{(1)}_c(g)+G(g,h)\over \|(g,h)\|^2_{X}} \geq 0.
\end{equation*}
The last inequality is due to $Q^{(1)}_c(g)\geq 0$ for all $g$ such that $(g, f)_{H^1}=0$ and that $G(g,h)\geq0$. Therefore $\lambda_1$ is simple. Denote the corresponding eigenfunction by $\vec{\chi}=(\chi,\mu)$. From the result in (2) we see that $\lambda_2=0$ is also simple. This completes the proof of the lemma.
\end{enumerate}
\end{proof}

\begin{remark}
(i) Notice that the above lemma applies to all smooth solitary waves of \eqref{genCH2_uandeta} without the restriction that $\sigma\leq1$.

Under the assumption $c<A_2$, we can consider the operator
\begin{equation*}
 H_c=-\LC \begin{array}{cc}
      L_c & \displaystyle  -{c\over c-\varphi}\\\\
\displaystyle      -{c\over c-\varphi} & c-\varphi
     \end{array}
  \RC=
  \LC \begin{array}{cc}
      -L_c & \displaystyle  {c\over c-\varphi}\\\\
\displaystyle      {c\over c-\varphi} & -c+\varphi
     \end{array}
  \RC.
\end{equation*}
In this setting we have for smooth solitary wave that $c<c-A_2\leq\varphi(x)<0$ and $c-\sigma\varphi$
is bounded away from zero. By a similar argument, all properties of $H_c$ in Lemma \ref{lm_specH_c} are still valid.

(ii) We will apply the method of Grillakis-Shatah-Strauss \cite{GSS} to establish the stability of smooth solitary waves. However our problem does not exactly fall into the frame work there since the operator $J$ is not onto. But in fact the invertibility of $J$ is only needed to get instability and is not required for stability (see Sections 3 and 4 in \cite{GSS} for more details). Hence the argument in Section 3 of \cite{GSS} can still be used here.
\end{remark}

Let $\vec{\varphi}_c=(\varphi,\eta)$ be a solitary wave of \eqref{genCH2_uandeta}. Consider the
following scalar function
\begin{equation}\label{def_d(c)}
d(c)=\left\{\begin{array}{l}
        cE(\vec{\varphi}_c)-F(\vec{\varphi}_c)\quad \hbox{if}\quad c>A_1,\\
        F(\vec{\varphi}_c)-cE(\vec{\varphi}_c)\quad \hbox{if}\quad c<A_2.
                  \end{array}
         \right.
\end{equation}
The next lemma shows that for $\sigma\leq1$, , $c>A_1$ or $c<A_2$ and $\vec{\varphi}_c=(\varphi,\eta)\in X$
being a smooth solitary wave of \eqref{genCH2_uandeta},  $d(c)$ is convex in $c$.

\begin{lemma}\label{lm_signofd''}
Assume $\sigma\leq1$, $c>A_1$ or $c<A_2$ and $\vec{\varphi}_c=(\varphi,\eta)$ is a smooth solitary
wave of \eqref{genCH2_uandeta}. Then $d''(c)>0$.
\end{lemma}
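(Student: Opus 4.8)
The plan is to use the Grillakis--Shatah--Strauss identity $d'(c)=E(\vec{\varphi}_c)$ and then compute $d''(c)$ explicitly by reducing $E(\vec{\varphi}_c)$ to a one-dimensional integral over the profile. First I would differentiate $d(c)=cE(\vec{\varphi}_c)-F(\vec{\varphi}_c)$ in $c$: by the chain rule all terms containing $\partial_c\vec{\varphi}_c$ collect into $\langle cE'(\vec{\varphi}_c)-F'(\vec{\varphi}_c),\,\partial_c\vec{\varphi}_c\rangle$, which vanishes by the solitary-wave identity \eqref{eqnsolitary} (here $\partial_c\vec{\varphi}_c\in X$ thanks to the exponential decay from Proposition \ref{prop_phi_c}). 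Hence $d'(c)=E(\vec{\varphi}_c)$ and $d''(c)=\tfrac{d}{dc}E(\vec{\varphi}_c)$.

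Next, for a smooth wave with $c>A_1$ the profile $\varphi$ is even and positive, with a single crest of height $c-A_1$ and strictly decreasing on $(0,\infty)$ (Theorem \ref{thm_solexist}), so $\varphi_x=-\sqrt{F(\varphi)}$ there, with $F$ as in \eqref{eqnphi_x}. Using \eqref{consE}, \eqref{soleta_phi}, and $dx=-d\varphi/\sqrt{F(\varphi)}$ I would write
\[
E(\vec{\varphi}_c)=\int_0^{c-A_1}\left[\frac{\varphi^2+\varphi^2/(c-\varphi)^2}{\sqrt{F(\varphi)}}+\sqrt{F(\varphi)}\right]d\varphi ,
\]
and then substitute $s=c-\varphi\in[A_1,c]$. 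Writing $D:=c-\sigma\varphi=s+(1-\sigma)(c-s)$ and using $A_1+A_2=-A,\ A_1A_2=-1$ from \eqref{def_A_12} so that $(s-A_1)(s-A_2)=s^2+As-1$, this yields the explicit form
\[
E(\vec{\varphi}_c)=\int_{A_1}^{c}\frac{(c-s)\,N(s,c)}{s^2\sqrt{s\,(s^2+As-1)\,D}}\,ds,\qquad N:=(s^2+1)sD+s^2(s^2+As-1).
\]

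I would then differentiate under the integral sign. The moving endpoint $s=c$ contributes no boundary term, since the integrand carries the factor $(c-s)$ and so vanishes there, while the lower endpoint is fixed; the $(s-A_1)^{-1/2}$-type singularity at $s=A_1$ is integrable and persists after differentiation, justifying the Leibniz rule. With $\partial_c D=1-\sigma$ and $\partial_c N=(1-\sigma)s(s^2+1)$ I obtain
\[
d''(c)=\int_{A_1}^{c}\frac{1}{s^2\sqrt{s\,(s^2+As-1)\,D}}\left[\,N+(1-\sigma)(c-s)s(s^2+1)-\frac{(1-\sigma)(c-s)N}{2D}\,\right]ds .
\]

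The main obstacle is the sign of the bracket, and this is exactly where $\sigma\le1$ is decisive. Since $s\in[A_1,c]$ with $A_1>0$, each of $s$, $s^2+As-1=(s-A_1)(s-A_2)$ and $D=s+(1-\sigma)(c-s)$ is nonnegative, so $N>0$ and the prefactor is positive; moreover $1-\sigma\ge0$ makes the middle term nonnegative. For the troublesome last term I would use $D=s+(1-\sigma)(c-s)>(1-\sigma)(c-s)$ (because $s>0$), which forces $\tfrac{(1-\sigma)(c-s)}{2D}<\tfrac12$. Hence the bracket exceeds $\tfrac12 N+(1-\sigma)(c-s)s(s^2+1)\ge\tfrac12 N>0$, giving $d''(c)>0$. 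The case $c<A_2$ follows from the symmetric setup noted in the Remark after Lemma \ref{lm_specH_c}.
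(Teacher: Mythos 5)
Your proof is correct and follows essentially the same route as the paper: the identity $d'(c)=E(\vec{\varphi}_c)$, reduction of $E$ to an explicit integral over the profile via the substitution $s=c-\varphi$ (the paper's $y$), and differentiation under the integral sign (with the vanishing boundary term at $s=c$) followed by a sign check on the integrand. The only difference is bookkeeping: the paper splits the integrand into two pieces $I_1+I_2$ and verifies each is positive separately, whereas you keep a single rational expression and absorb the negative term using $D-(1-\sigma)(c-s)=s>0$; both arguments invoke $\sigma\le 1$ at the same point.
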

\begin{proof}
Consider first $c>A_1$. Differentiating $d(c)$ with respect to $c$ and then applying \eqref{eqnsolitary} we obtain
\begin{equation}\label{compd'(c)}
d'(c)=\langle cE'(\vec{\varphi}_c)-F'(\vec{\varphi}_c), \partial_c\vec{\varphi}_c \rangle+E(\vec{\varphi}_c)=E(\vec{\varphi}_c).
\end{equation}
In view of the even symmetry of $\varphi$, it follows from \eqref{soleta_phi} and \eqref{eqnphi_x} that
\begin{align*}
d'(c)=E(\vec{\varphi}_c)&=\int^\infty_0 \LB \varphi^2_x+\varphi^2+{\varphi^2\over(c-\varphi)^2} \RB\ dx\\
&=\int^\infty_0 \varphi^2\LB {(c-\varphi-A_1)(c-\varphi-A_2)\over (c-\varphi)(c-\sigma\varphi)}+1+{1\over (c-\varphi)^2} \RB\ dx.
\end{align*}
Recall that $0<\varphi(x)\leq c-A_1$ and $\varphi'(x)<0$ on $(0,\infty)$ when $c>A_1$. We have from \eqref{eqnphi_x} that
\begin{align}
d'(c)=&-\int^\infty_0 \varphi\varphi_x\sqrt{{(c-\varphi)(c-\sigma\varphi) \over (c-\varphi-A_1)(c-\varphi-A_2)}}\cdot \nonumber\\
&\quad \ \  \Big[ {(c-\varphi-A_1)(c-\varphi-A_2)\over (c-\varphi)(c-\sigma\varphi)}+1+{1\over (c-\varphi)^2} \Big] \ dx. \label{explicitd'(c)}
\end{align}
Introducing a change of variable $y=c-\varphi(x)$ the above becomes
\begin{align*}
d'(c)=\int^c_{A_1}(c-y)\sqrt{{y\LB (1-\sigma)c+\sigma y \RB \over (y-A_1)(y-A_2)}}\LB {(y-A_1)(y-A_2) \over y\LB (1-\sigma)c+\sigma y \RB}+1+{1\over y^2} \RB \ dx.
\end{align*}
Differentiating the above with respect to $c$ we have
\begin{align*}
d''(c)&=0+\int^c_{A_1} \partial_c\LCB (c-y)\sqrt{{(y-A_1)(y-A_2) \over y\LB (1-\sigma)c+\sigma y \RB}} \RCB\ dy \\
&\quad\quad+ \int^c_{A_1} \partial_c\LCB (c-y)\sqrt{{y\LB (1-\sigma)c+\sigma y \RB \over (y-A_1)(y-A_2)}}\LC 1+{1\over y^2} \RC \RCB \ dy\\
&:= \int^C_{A_1}I_1(y)\ dy + \int^C_{A_1} I_2(y)\ dy.
\end{align*}
Since $\varphi$ is smooth, we have that $c-\sigma\varphi>0$. Hence $(1-\sigma)c+\sigma y=c-\sigma\varphi>0$. Moreover, we know that $A_2<0<A_1<y<c$. Further explicit computation shows that
\begin{align*}
I_1(y)&=\sqrt{{(y-A_1)(y-A_2) \over y\LB (1-\sigma)c+\sigma y \RB}}\cdot\LC {[(1-\sigma)c+\sigma y]+y \over 2[(1-\sigma)c+\sigma y]} \RC>0.
\end{align*}
Here we don't need to assume $\sigma\leq1$. If $\sigma\leq1$, we have
\begin{align*}
I_2(y)&={(y^2+1)\LB (1-\sigma)c+\sigma y+{1\over 2}(1-\sigma)(c-y) \RB\over y\sqrt{(y-A_1)(y-A_2)y[(1-\sigma)c+\sigma y]}}>0.
\end{align*}
Therefore $d''(c)>0$.

The other case $c<A_2$ can be handled in a very similar way and hence we omit it.
\end{proof}

The next lemma can be obtained by exactly the same proof as in \cite{GSS}.
\begin{lemma}\label{lm_implicit}
Let $\sigma\leq1$ and $\vec{\varphi}_c=(\varphi,\eta)$ be a solitary wave of \eqref{genCH2_uandeta}. There exist $\epsilon>0$ and a unique $C^1$ map $s: U_\epsilon \to \mathbb{R}$  such that for every $\vec{u}\in U_\epsilon$ and $r\in \mathbb{R}$
\begin{enumerate}
\item
\begin{equation*}
(\vec{u}(\cdot+s(\vec{u})), \vec{\varphi}_x)=0,
\end{equation*}
\item
\begin{equation*}
s(\vec{u}(\cdot+r))=s(\vec{u})-r.
\end{equation*}
\end{enumerate}
\end{lemma}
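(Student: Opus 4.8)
The plan is to obtain $s$ from the implicit function theorem applied to a single scalar constraint, and then to propagate the local solution across the entire tube $U_\epsilon$ by exploiting the translation invariance of the inner product. Since the relevant solitary waves are smooth, $\varphi$ and $\eta$ together with all their derivatives decay exponentially, so $\vec{\varphi}_x=(\varphi_x,\eta_x)\in H^1\times L^2=X$ and, more generally, $s\mapsto\vec{\varphi}_x(\cdot-s)$ is a $C^1$ (indeed smooth) map of $\mathbb{R}$ into $X$ with derivative $-\vec{\varphi}_{xx}(\cdot-s)\in X$. I would set $\Phi:X\times\mathbb{R}\to\mathbb{R}$, $\Phi(\vec{u},s)=(\vec{u}(\cdot+s),\vec{\varphi}_x)$; translating the argument gives the equivalent form $\Phi(\vec{u},s)=(\vec{u},\vec{\varphi}_x(\cdot-s))$, from which it is immediate that $\Phi$ is $C^1$, being bounded linear in $\vec{u}$ and $C^1$ in $s$.

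First I would verify the hypotheses of the implicit function theorem at the base point $(\vec{\varphi}_c,0)$. One has $\Phi(\vec{\varphi}_c,0)=(\vec{\varphi}_c,\vec{\varphi}_x)=0$, since each term of the integrand is a perfect derivative of an exponentially decaying quantity (equivalently, $\varphi$ is even and $\varphi_x$ odd). For the nondegeneracy, moving the $s$-derivative onto the smooth test function and integrating by parts yields
\begin{equation*}
\partial_s\Phi(\vec{\varphi}_c,0)=(\vec{\varphi}_c,-\vec{\varphi}_{xx})=\int_{\mathbb{R}}\left(\varphi_x^2+\varphi_{xx}^2+\eta_x^2\right)\,dx=\|\vec{\varphi}_x\|_X^2>0,
\end{equation*}
the strict inequality holding because $\vec{\varphi}_c$ is nontrivial. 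Hence the implicit function theorem produces $\epsilon_0>0$ and a unique $C^1$ map $s_0$ on the ball $B(\vec{\varphi}_c,\epsilon_0)\subset X$ with $s_0(\vec{\varphi}_c)=0$ and $\Phi(\vec{u},s_0(\vec{u}))=0$, the value $s_0(\vec{u})$ being the only zero of $t\mapsto\Phi(\vec{u},t)$ in a fixed neighborhood of $0$.

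Next I would extend $s_0$ to the whole tube. Because translation is an isometry of $X$ and the orbit $\{\vec{\varphi}_c(\cdot-r)\}$ is translation invariant, so is $U_\epsilon$; given $\vec{u}\in U_\epsilon$ with $\|\vec{u}-\vec{\varphi}_c(\cdot-r)\|_X<\epsilon$, the translate $\vec{w}=\vec{u}(\cdot+r)$ lies in $B(\vec{\varphi}_c,\epsilon)$, and I may set $s(\vec{u})=s_0(\vec{w})+r$. Property (1) follows at once, since the identity $(\vec{w}(\cdot+s_0(\vec{w})),\vec{\varphi}_x)=0$ reads $(\vec{u}(\cdot+s(\vec{u})),\vec{\varphi}_x)=0$. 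Locally $r$ may be held fixed, so $s$ is the composition of the bounded linear map $\vec{u}\mapsto\vec{u}(\cdot+r)$ with the $C^1$ map $s_0$, hence $C^1$ on $U_\epsilon$. Finally, property (2) is read off from (1) and uniqueness: for $\vec{v}=\vec{u}(\cdot+r)$ the quantity $(\vec{v}(\cdot+t),\vec{\varphi}_x)=(\vec{u}(\cdot+r+t),\vec{\varphi}_x)$ vanishes precisely at $t=s(\vec{u})-r$, which must therefore coincide with $s(\vec{v})$.

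The main obstacle is the \emph{well-definedness} of this extension, namely independence of the choice of $r$: if $\vec{u}$ is close to both $\vec{\varphi}_c(\cdot-r_1)$ and $\vec{\varphi}_c(\cdot-r_2)$, I must show $s_0(\vec{u}(\cdot+r_1))+r_1=s_0(\vec{u}(\cdot+r_2))+r_2$. This rests on the localization estimate $\inf_{|r|\geq\delta}\|\vec{\varphi}_c-\vec{\varphi}_c(\cdot-r)\|_X>0$, valid for each $\delta>0$ by the exponential decay of the profile; shrinking $\epsilon$ then forces $r_1$ and $r_2$ to be so close that $s_0(\vec{u}(\cdot+r_1))$ and $s_0(\vec{u}(\cdot+r_2))$ both fall in the single neighborhood where the zero of $\Phi$ is unique, so the two expressions agree. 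All remaining estimates are routine and match the argument of \cite{GSS}.
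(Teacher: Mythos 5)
Your proposal is correct and follows essentially the same route as the paper, which simply invokes the implicit-function-theorem argument of Grillakis--Shatah--Strauss for this lemma. The computation $\partial_s\Phi(\vec{\varphi}_c,0)=\|\vec{\varphi}_x\|_X^2>0$, the translation-invariance extension to the tube, and the well-definedness check via $\inf_{|r|\geq\delta}\|\vec{\varphi}_c-\vec{\varphi}_c(\cdot-r)\|_X>0$ are exactly the ingredients of that standard proof.
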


By the spectrum analysis in Lemma \ref{lm_specH_c} and the convexity of $d(c)$ in Lemma \ref{lm_signofd''}, we can follow exactly the same idea as in \cite{GSS} Theorem 3.3 to get
\begin{lemma}\label{lm_lowerbound}
Let the assumptions of Lemma \ref{lm_signofd''} hold. There exists a constant $k=k(c)>0$ such that
\begin{equation}\label{lowerbound}
\langle H_c(\vec{\psi}), \vec{\psi} \rangle \geq
k\|\vec{\psi}\|^2_{X},
\end{equation}
for all $\vec{\psi}\in X$ satisfying $(\vec{\varphi}_c,
\vec{\psi})=(\vec{\varphi}'_c, \vec{\psi})=0$.
\end{lemma}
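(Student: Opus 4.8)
The plan is to prove the coercivity estimate \eqref{lowerbound} by combining the spectral information about $H_c$ from Lemma \ref{lm_specH_c} with the convexity $d''(c)>0$ from Lemma \ref{lm_signofd''}, following the abstract framework of Grillakis--Shatah--Strauss. The geometric picture is that $H_c$ has exactly one negative direction (spanned by the eigenfunction $\vec{\chi}$) and one zero direction (spanned by $\vec{\varphi}'_c$), with the rest of the spectrum positive and bounded away from zero by part (1) of Lemma \ref{lm_specH_c}. The orthogonality constraint $(\vec{\varphi}'_c,\vec{\psi})=0$ kills the kernel direction, so the only obstruction to positivity is the single negative direction; the role of the second constraint $(\vec{\varphi}_c,\vec{\psi})=0$, together with $d''(c)>0$, is precisely to exclude that remaining bad direction.

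First I would decompose the admissible space. Set $N=\mathrm{span}\{\vec{\chi}\}$, $Z=\mathrm{span}\{\vec{\varphi}'_c\}$, and let $P$ be the spectral subspace on which $H_c$ is positive and bounded below, so that $X=N\oplus Z\oplus P$ and $H_c$ restricted to $P$ satisfies $\langle H_c\vec p,\vec p\rangle\geq\delta\|\vec p\|_X^2$ for some $\delta>0$. For $\vec{\psi}$ satisfying the two orthogonality conditions, write $\vec{\psi}=a\vec{\chi}+b\,\vec{\varphi}'_c+\vec p$; the constraint $(\vec{\varphi}'_c,\vec{\psi})=0$ forces $b=0$ up to a controlled contribution, after which $\langle H_c\vec{\psi},\vec{\psi}\rangle=a^2\lambda_1+\langle H_c\vec p,\vec p\rangle$ (the cross terms vanish by self-adjointness and the spectral splitting). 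Everything then reduces to showing that the negative contribution $a^2\lambda_1$ is dominated by $\langle H_c\vec p,\vec p\rangle$, which is where the constraint $(\vec{\varphi}_c,\vec{\psi})=0$ enters.

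The key computation is to identify $\partial_c\vec{\varphi}_c$ as the element of $X$ with $H_c(\partial_c\vec{\varphi}_c)=-E'(\vec{\varphi}_c)=-\vec{\varphi}_c$ (up to the usual isomorphism $I$), obtained by differentiating the profile equation \eqref{eqnsolitary} in $c$. Pairing gives $\langle H_c(\partial_c\vec{\varphi}_c),\partial_c\vec{\varphi}_c\rangle=-\langle\vec{\varphi}_c,\partial_c\vec{\varphi}_c\rangle=-d''(c)<0$, so $\partial_c\vec{\varphi}_c$ is a concrete direction on which $H_c$ is negative; because $H_c$ has only one negative eigenvalue, $\partial_c\vec{\varphi}_c$ must have a nonzero component along $\vec{\chi}$. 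Imposing $(\vec{\varphi}_c,\vec{\psi})=0$ then constrains $\vec{\psi}$ to be essentially $H_c$-orthogonal to $\partial_c\vec{\varphi}_c$, and a standard Lagrange-multiplier / two-dimensional quadratic-form argument (exactly as in the proof of Theorem 3.3 of \cite{GSS}) shows that the minimum of $\langle H_c\vec{\psi},\vec{\psi}\rangle$ over the constrained set is strictly positive precisely when $d''(c)>0$. Combining with the spectral gap on $P$ yields a uniform lower bound $k\|\vec{\psi}\|_X^2$.

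I expect the main obstacle to be the bookkeeping of the two constraints simultaneously, namely verifying that the single sign condition $d''(c)>0$ is exactly what is needed to overcome the one negative eigenvalue after the kernel has been projected out. The delicate point is that $\vec{\chi}$, $\vec{\varphi}'_c$, and $\partial_c\vec{\varphi}_c$ are three related but distinct directions, and one must track their mutual pairings under $H_c$ carefully; in particular one uses $\langle H_c\vec{\chi},\vec{\varphi}'_c\rangle=0$ (orthogonality of eigenspaces) and the fact that the component of $\partial_c\vec{\varphi}_c$ along $\vec{\chi}$ is nonzero. Since all of this is purely the abstract argument of \cite{GSS} applied to the operator $H_c$ whose spectrum is pinned down in Lemma \ref{lm_specH_c}, the proof can be concluded by invoking that framework verbatim once the two inputs---the spectral structure and $d''(c)>0$---are in place.
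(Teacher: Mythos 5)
Your proposal is correct and follows essentially the same route as the paper, which simply cites Theorem 3.3 of \cite{GSS} with the spectral structure of $H_c$ (Lemma \ref{lm_specH_c}) and the convexity $d''(c)>0$ (Lemma \ref{lm_signofd''}) as the two inputs; your sketch accurately reconstructs the internals of that argument, including the key identities $H_c(\partial_c\vec{\varphi}_c)=-E'(\vec{\varphi}_c)$ and $\langle H_c(\partial_c\vec{\varphi}_c),\partial_c\vec{\varphi}_c\rangle=-d''(c)<0$.
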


The following lemma can be obtained directly from Lemma \ref{lm_implicit} and Lemma \ref{lm_lowerbound}.
\begin{lemma}\label{lm_gap}
Let the assumptions of Lemma \ref{lm_signofd''} hold. There exists an $\epsilon>0$ such that
\begin{equation*}
F(\vec{\varphi}_c)-F(\vec{u})\geq {k\over
4}\|\vec{u}(\cdot+s(\vec{u}))-\vec{\varphi}_c\|^2_{X},
\end{equation*}
for $\vec{u}\in U_\epsilon$ satisfying $E(\vec{u})=E(\vec{\varphi}_c)$.
\end{lemma}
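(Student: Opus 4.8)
The plan is to follow the classical Grillakis--Shatah--Strauss convexity argument, exploiting that $E$ is \emph{exactly} half the square of the $X$-norm and that $F$ is a \emph{cubic} polynomial functional, so every Taylor expansion is elementary and the remainders are genuinely of third order.

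First I would reduce to a statement about the translated profile. Given $\vec{u}\in U_\epsilon$, set $\vec{v}=\vec{u}(\cdot+s(\vec{u}))$ with $s$ the map from Lemma \ref{lm_implicit}. Since $E$ and $F$ are translation invariant, $E(\vec{v})=E(\vec{u})=E(\vec{\varphi}_c)$ and $F(\vec{v})=F(\vec{u})$, so it suffices to bound $F(\vec{\varphi}_c)-F(\vec{v})$ from below by $\frac{k}{4}\|\vec{v}-\vec{\varphi}_c\|_X^2$. Writing $\vec{\psi}=\vec{v}-\vec{\varphi}_c$, Lemma \ref{lm_implicit} gives $(\vec{v},\vec{\varphi}_x)=0$; since $\vec{\varphi}_c$ is even and $\vec{\varphi}_x$ is odd we have $(\vec{\varphi}_c,\vec{\varphi}_x)=0$, hence $(\vec{\psi},\vec{\varphi}_x)=0$. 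Shrinking $\epsilon$ if necessary, Lemma \ref{lm_implicit} and the definition of $U_\epsilon$ make $\|\vec{\psi}\|_X$ as small as we like.

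Next I would Taylor expand. Because $F$ is cubic, $F(\vec{v})=F(\vec{\varphi}_c)+\langle F'(\vec{\varphi}_c),\vec{\psi}\rangle+\frac12\langle F''(\vec{\varphi}_c)\vec{\psi},\vec{\psi}\rangle+R(\vec{\psi})$, where the cubic remainder satisfies $|R(\vec{\psi})|\le C\|\vec{\psi}\|_X^3$ by applying $H^1\hookrightarrow L^\infty$ to the terms $\int\psi^3$, $\int\psi\psi_x^2$ and $\int\psi\omega^2$ (with $\vec\psi=(\psi,\omega)$). Since $E$ is exactly quadratic, $E(\vec{v})=E(\vec{\varphi}_c)+(\vec{\varphi}_c,\vec{\psi})_X+\frac12\|\vec{\psi}\|_X^2$ with no remainder, and the constraint $E(\vec{v})=E(\vec{\varphi}_c)$ forces $(\vec{\varphi}_c,\vec{\psi})_X=-\frac12\|\vec{\psi}\|_X^2$. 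Using the solitary-wave identity $F'(\vec{\varphi}_c)=cE'(\vec{\varphi}_c)$ from \eqref{eqnsolitary}, together with $\langle E'(\vec{\varphi}_c),\vec{\psi}\rangle=(\vec{\varphi}_c,\vec{\psi})_X$ and $\langle E''(\vec{\varphi}_c)\vec{\psi},\vec{\psi}\rangle=\|\vec{\psi}\|_X^2$, these combine to $F(\vec{\varphi}_c)-F(\vec{v})=\frac12\langle H_c\vec{\psi},\vec{\psi}\rangle-R(\vec{\psi})$, so the task reduces to a lower bound for the quadratic form $\langle H_c\vec{\psi},\vec{\psi}\rangle$.

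The remaining, and I expect principal, obstacle is that the translation condition only yields orthogonality of $\vec{\psi}$ to $\vec{\varphi}_x$, whereas the coercivity estimate of Lemma \ref{lm_lowerbound} also requires orthogonality to $\vec{\varphi}_c$. I would therefore split $\vec{\psi}=a\vec{\varphi}_c+\vec{\psi}_0$ with $(\vec{\varphi}_c,\vec{\psi}_0)_X=0$ and $a=(\vec{\varphi}_c,\vec{\psi})_X/\|\vec{\varphi}_c\|_X^2$. Since $\vec{\varphi}_c\perp\vec{\varphi}_x$, the component $\vec{\psi}_0$ still satisfies both orthogonality relations, so Lemma \ref{lm_lowerbound} gives $\langle H_c\vec{\psi}_0,\vec{\psi}_0\rangle\ge k\|\vec{\psi}_0\|_X^2$. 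The crucial point is that the energy constraint makes this decomposition essentially free: $|a|=\|\vec{\psi}\|_X^2/(2\|\vec{\varphi}_c\|_X^2)=O(\|\vec{\psi}\|_X^2)$, whence $\|\vec{\psi}_0\|_X^2=\|\vec{\psi}\|_X^2-a^2\|\vec{\varphi}_c\|_X^2=\|\vec{\psi}\|_X^2\bigl(1-O(\|\vec{\psi}\|_X^2)\bigr)$. Because $H_c$ is a bounded operator from $X$ to $X^*$ and is self-adjoint, the cross term $2a\langle H_c\vec{\psi}_0,\vec{\varphi}_c\rangle$ and the diagonal term $a^2\langle H_c\vec{\varphi}_c,\vec{\varphi}_c\rangle$ are both $O(\|\vec{\psi}\|_X^3)$. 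Collecting everything, $F(\vec{\varphi}_c)-F(\vec{v})\ge\frac{k}{2}\|\vec{\psi}\|_X^2\bigl(1-O(\|\vec{\psi}\|_X)\bigr)\ge\frac{k}{4}\|\vec{\psi}\|_X^2$ once $\epsilon$ is chosen small enough, which is the desired inequality after recalling $F(\vec{v})=F(\vec{u})$ and $\vec{v}-\vec{\varphi}_c=\vec{u}(\cdot+s(\vec{u}))-\vec{\varphi}_c$.
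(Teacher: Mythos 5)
Your proposal is correct and follows exactly the route the paper intends: the paper omits the proof, stating only that the lemma "can be obtained directly from Lemma \ref{lm_implicit} and Lemma \ref{lm_lowerbound}" (i.e.\ the standard Grillakis--Shatah--Strauss argument), and your write-up supplies precisely that argument — Taylor expansion using the exact quadratic structure of $E$ and the cubic structure of $F$, the identity $cE'(\vec{\varphi}_c)=F'(\vec{\varphi}_c)$, and the decomposition of $\vec{\psi}$ into its $\vec{\varphi}_c$-component (which the energy constraint makes $O(\|\vec{\psi}\|_X^2)$) plus a remainder to which Lemma \ref{lm_lowerbound} applies. The details all check.
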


\begin{proof}[Proof of Theorem \ref{thm_stability}]
In view of Lemma \ref{lm_specH_c} and Lemma \ref{lm_gap}, the result of theorem is then a direct consequence of Theorem 3.5 in \cite{GSS}.
\end{proof}

\vspace{.1in}


\section{Global solutions when $\sigma=0$}\label{sec_global}


In \cite{CLi}, the authors established a blow-up criterion for $\sigma\neq0$ (cf. Theorem 3.3
in \cite{CLi}). In fact, the restriction of $\sigma\neq0$ can be removed using the same argument
and hence we get
\begin{theorem}\label{thm_blowup1}
 Let $(u,\rho)$ be the solution of
\eqref{genCH2_uandrho} with initial data $(u_0, \rho_0-1)\in
H^s(\mathbb{R})\times H^{s-1}(\mathbb{R})$, $s>3/2$, and $T$ the
maximal time of existence. Then
\begin{equation}\label{blowupcriterion1}
 T<\infty\quad \Rightarrow \quad \int^T_0\|u_x(\tau)\|_{L^\infty}d\tau=\infty.
\end{equation}
\end{theorem}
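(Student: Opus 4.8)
The plan is to argue by contradiction: assuming $T<\infty$ but $\int_0^T\|u_x(\tau)\|_{L^\infty}\,d\tau<\infty$, I would show that the $H^s\times H^{s-1}$ norm of $(u,\eta)$ (with $\eta=\rho-1$) stays bounded on $[0,T)$, which by the standard continuation criterion accompanying the local well-posedness Theorem \ref{thm_localwellposedness} allows the solution to be extended past $T$, contradicting maximality. Throughout I work with the convolution form \eqref{genCH2_uandeta_convolution} and write $\Lambda^s=(1-\partial_x^2)^{s/2}$, so that $\|f\|_{H^s}=\|\Lambda^s f\|_{L^2}$ and $\partial_x p\ast{}=\partial_x(1-\partial_x^2)^{-1}$ is a smoothing operator, bounded from $H^{s-1}$ to $H^s$, which annihilates constants.

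First I would record two a priori $L^\infty$ bounds valid under the hypothesis $\int_0^T\|u_x\|_{L^\infty}\,d\tau<\infty$. Since $E(\vec u)$ is conserved, $\|u(t)\|_{H^1}$ is bounded, and the embedding $H^1\hookrightarrow L^\infty$ gives a uniform bound on $\|u(t)\|_{L^\infty}$. For $\eta$, the second equation is the transport equation $\eta_t+u\eta_x+(1+\eta)u_x=0$; along the flow $\dot q=u(t,q)$ one gets $\frac{d}{dt}(1+\eta)=-(1+\eta)u_x$, hence $\|(1+\eta)(t)\|_{L^\infty}\le\|1+\eta_0\|_{L^\infty}\exp\big(\int_0^t\|u_x\|_{L^\infty}\,d\tau\big)$, so $\|\eta(t)\|_{L^\infty}$ is uniformly bounded on $[0,T)$. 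These bounds turn $\|u\|_{L^\infty}$ and $\|\eta\|_{L^\infty}$ into harmless constants in the Moser estimates below.

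Next come the energy estimates. Set $Y(t)=\|u\|_{H^s}^2+\|\eta\|_{H^{s-1}}^2$. Applying $\Lambda^s$ to the first equation of \eqref{genCH2_uandeta_convolution} and pairing with $\Lambda^s u$, the transport term $\sigma uu_x$ is controlled by the Kato--Ponce commutator estimate $\|[\Lambda^s,u]u_x\|_{L^2}\le C\|u_x\|_{L^\infty}\|u\|_{H^s}$ together with the integration-by-parts identity $\int u\,\partial_x\Lambda^s u\,\Lambda^s u\,dx=-\tfrac12\int u_x(\Lambda^s u)^2\,dx$; the nonlocal term is bounded using $\|\partial_x p\ast g\|_{H^s}\le C\|g\|_{H^{s-1}}$ and the product estimates $\|u^2\|_{H^{s-1}}\le C\|u\|_{L^\infty}\|u\|_{H^s}$, $\|u_x^2\|_{H^{s-1}}\le C\|u_x\|_{L^\infty}\|u\|_{H^s}$, $\|\eta^2\|_{H^{s-1}}\le C\|\eta\|_{L^\infty}\|\eta\|_{H^{s-1}}$, the constant in $(1+\eta)^2$ being annihilated by $\partial_x p\ast{}$. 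Applying $\Lambda^{s-1}$ to the second equation and pairing with $\Lambda^{s-1}\eta$, I would write $((1+\eta)u)_x=u_x+\partial_x(\eta u)$ and treat the genuinely nonlinear piece via the commutator $[\Lambda^{s-1}\partial_x,u]\eta$, for which the sharp estimate $\|[\Lambda^{s-1}\partial_x,u]\eta\|_{L^2}\le C(\|u_x\|_{L^\infty}\|\eta\|_{H^{s-1}}+\|u\|_{H^s}\|\eta\|_{L^\infty})$ is exactly what avoids the uncontrolled quantity $\|\eta_x\|_{L^\infty}$. Collecting everything yields the differential inequality $\frac{d}{dt}Y\le C\big(1+\|u_x\|_{L^\infty}\big)Y$, where $C$ depends only on $A$, $\sigma$ and the now-bounded $\|u\|_{L^\infty}$, $\|\eta\|_{L^\infty}$.

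Gronwall's inequality then gives $Y(t)\le Y(0)\exp\big(C\int_0^t(1+\|u_x\|_{L^\infty})\,d\tau\big)$, which is finite on $[0,T)$ under the assumption, producing the desired contradiction. The main obstacle is the $\eta$-equation estimate: because $\eta$ lives only in $H^{s-1}$, a naive commutator bound for the transport term produces $\|\eta_x\|_{L^\infty}$, which is not controlled by the hypothesis; the fix is to keep the derivative inside the operator and use the order-$s$ commutator estimate for $[\Lambda^{s-1}\partial_x,u]$ above, which trades that derivative onto $u$ (controlled by $\|u\|_{H^s}$) at the cost of the factor $\|\eta\|_{L^\infty}$, whose boundedness is precisely the content of the second paragraph. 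Since none of these steps uses $\sigma\ne0$, the argument is identical to that of \cite{CLi} with the restriction removed.
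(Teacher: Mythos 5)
Your argument is correct and coincides with the paper's treatment: the paper proves Theorem \ref{thm_blowup1} simply by observing that the blow-up criterion of Theorem 3.3 in \cite{CLi} extends verbatim to $\sigma=0$, and the underlying argument there is exactly your contradiction scheme — conservation of $E$ plus the transport bound on $\rho$ along characteristics to control the $L^\infty$ quantities, Kato--Ponce commutator and Moser estimates on the convolution form of the system to get $\frac{d}{dt}Y\le C(1+\|u_x\|_{L^\infty})Y$, and Gronwall with the continuation property of Theorem \ref{thm_localwellposedness}. No gaps; your remark about keeping the derivative inside the commutator $[\Lambda^{s-1}\partial_x,u]\eta$ to avoid the uncontrolled $\|\eta_x\|_{L^\infty}$ is precisely the point that makes the $\eta$-estimate close.
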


The wave-breaking phenomena for system \eqref{genCH2_uandrho} when $\sigma\neq0$ was discussed
in details in \cite{CLi}. Here we show that when $\sigma=0$ the solutions constructed in Theorem
\ref{thm_localwellposedness} are global-in-time.
\begin{theorem}\label{thm_globalwellposedness}
 Let $\sigma=0$. If $(u_0, \rho_0-1)\in H^s\times H^{s-1}$, $s>3/2$, then there exists
 a unique solution $(u,\rho-1)$ of \eqref{genCH2_uandrho} in
 $C([0,\infty); H^s\times H^{s-1})\cap C^1([0,\infty); H^{s-1}\times H^{s-2})$
 with $(u,\rho)|_{t=0}=(u_0,\rho_0)$. Moreover, the solution depends
 continuously on the initial data and the Hamiltonian $H_1$
 is independent of the existence time.
\end{theorem}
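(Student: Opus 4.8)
My plan is to argue by contradiction through the blow-up criterion of Theorem~\ref{thm_blowup1}. Let $(u,\rho)$ be the local solution from Theorem~\ref{thm_localwellposedness} on a maximal interval $[0,T)$; by \eqref{blowupcriterion1} it suffices to prove that $\int_0^{T}\|u_x(\tau)\|_{L^\infty}\,d\tau<\infty$ whenever $T<\infty$, for then $T$ cannot be finite. The only conservation I would use is that of $H_1=\frac12\int_{\mathbb{R}}(u^2+u_x^2+(\rho-1)^2)\,dx$, which holds on $[0,T)$ by the local theory; through $H^1(\mathbb{R})\hookrightarrow L^\infty(\mathbb{R})$ it furnishes bounds on $\|u(t)\|_{H^1}$, $\|u(t)\|_{L^\infty}$ and $\|\rho(t)-1\|_{L^2}$ that are uniform in $t\in[0,T)$.

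The heart of the matter is a one-sided pointwise bound on $u_x$. Setting $\sigma=0$ in \eqref{genCH2_uandeta_convolution}, I write the first equation as $u_t=\partial_x p\ast g$ with $g=Au-\frac32u^2-\frac12\rho^2$ and differentiate in $x$; using $\partial_x^2(p\ast g)=p\ast g-g$ this gives $u_{xt}=p\ast g-g=\frac12\rho^2+R$, where $R:=p\ast g-Au+\frac32u^2$. Expanding $\rho^2=1+2(\rho-1)+(\rho-1)^2$ and using $\|p\|_{L^1}=1$, $\|p\|_{L^2}=\|p\|_{L^\infty}=\frac12$, every term of $R$ is controlled by $\|u\|_{H^1}$ and $\|\rho-1\|_{L^2}$, so $\|R(t)\|_{L^\infty}\le C_1$ with $C_1=C_1(H_1)$ independent of $t$. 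Since $\frac12\rho^2\ge0$, it follows that $u_{xt}\ge-C_1$ holds \emph{pointwise}, and integrating in $t$ at fixed $x$ yields the one-sided lower bound $u_x(t,x)\ge-\|\partial_xu_0\|_{L^\infty}-C_1t$, linear in time.

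I would then propagate this through the density equation. Introducing the characteristics $q(t,\xi)$ solving $q_t=u(t,q)$, $q(0,\xi)=\xi$---well defined since $u\in C([0,T);C^1)$---the second equation becomes $\frac{d}{dt}\rho(t,q)=-\rho u_x$, so $\rho(t,q)=\rho_0(\xi)\exp(-\int_0^t u_x(\tau,q)\,d\tau)$. The lower bound on $u_x$ converts this into $\|\rho(t)\|_{L^\infty}\le\|\rho_0\|_{L^\infty}\exp(\|\partial_xu_0\|_{L^\infty}t+\frac{C_1}{2}t^2)$, finite on every bounded interval. Substituting back into $u_{xt}=\frac12\rho^2+R$ gives $\|u_x(t)\|_{L^\infty}\le\|\partial_xu_0\|_{L^\infty}+\int_0^t(\frac12\|\rho(\tau)\|_{L^\infty}^2+C_1)\,d\tau<\infty$ for each finite $t$; hence $\int_0^T\|u_x\|_{L^\infty}\,d\tau<\infty$ when $T<\infty$, contradicting \eqref{blowupcriterion1}. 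Thus $T=\infty$, and conservation of $H_1$ on $[0,\infty)$ gives the last assertion.

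I expect the main obstacle to be recognizing that a symmetric estimate of the pair $(\|u_x\|_{L^\infty},\|\rho\|_{L^\infty})$ cannot close: inserting $\|\rho\|_{L^\infty}\lesssim\|\rho_0\|_{L^\infty}\exp(\int_0^t\|u_x\|_{L^\infty})$ into $u_{xt}\sim\frac12\rho^2$ produces, for $P(t)=\int_0^t\|u_x\|_{L^\infty}$, a differential inequality of Liouville type $P''\le Ce^{2P}$ whose comparison solution escapes to infinity in finite time, so it yields no global bound. The resolution is to discard the upper estimate of $u_{xt}$ at this stage and use only the lower bound $u_{xt}\ge-C_1$, which is available precisely because $\frac12\rho^2\ge0$ and because for $\sigma=0$ the steepening term $\sigma uu_x$ is absent, so that $u_{xt}$ differs from a nonnegative quantity by an $H_1$-bounded remainder. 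This asymmetry---bound $\rho$ via the lower bound on $u_x$, then bound $u_x$ from the resulting control on $\rho$---is what makes the scheme terminate.
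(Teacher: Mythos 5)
Your argument is correct and shares the paper's skeleton --- reduce global existence to the blow-up criterion \eqref{blowupcriterion1} of Theorem \ref{thm_blowup1} and supply an a priori bound on $\|u_x(t)\|_{L^\infty}$ --- but the bound itself is obtained by a genuinely different mechanism. The paper's Lemma \ref{lm_estu_x} tracks $\sup_x u_x$ and $\inf_x u_x$ separately along characteristics via the Constantin--Escher lemma (Lemma \ref{lm_trajectory}), arriving at the coupled system $\bar m'=\tfrac12\bar\zeta^2+f$, $\bar\zeta'=-\bar\zeta\bar m$; the signs $\sup_x u_x\ge 0$ and $\inf_x u_x\le 0$ then force $|\bar\zeta(t)|\le|\bar\zeta(0)|$ (resp.\ $\ge$), which yields \emph{two-sided, linear-in-$t$} bounds on $u_x$. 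You instead use only the one-sided pointwise inequality $u_{xt}=\tfrac12\rho^2+R\ge -C_1$ (valid because $\rho^2\ge0$ and $\|R\|_{L^\infty}$ is controlled by the conserved $H_1$ --- your $R$ coincides with the paper's $f$ in \eqref{defnf}), integrate in $t$ to get $u_x\ge -\|\partial_x u_0\|_{L^\infty}-C_1t$, convert this through $\rho(t,q)=\rho_0\exp(-\int_0^t u_x)$ into an $e^{Ct^2}$-type bound on $\|\rho(t)\|_{L^\infty}$, and only then close the upper bound on $u_x$. Both routes give $\int_0^T\|u_x\|_{L^\infty}\,d\tau<\infty$ for finite $T$, which is all the criterion needs. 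What the paper's version buys is the sharper linear growth rate, with constants depending only on $\sup_x\rho_0^2$, $\inf_x\rho_0^2$ and $H_1$; what yours buys is the elimination of the extremum-tracking machinery and the attendant density reduction to $s\ge3$, since for $\sigma=0$ the absence of the transport term $\sigma uu_x$ makes the equation for $u_{tx}$ hold pointwise without evaluating at a critical point of $u_x$. Your closing observation about why the symmetric Gr\"onwall scheme for the pair $(\|u_x\|_{L^\infty},\|\rho\|_{L^\infty})$ fails to close, and why the asymmetric order (lower bound on $u_x$ first, then $\rho$, then the upper bound) terminates, correctly isolates the structural point that both proofs rely on.
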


As discussed in \cite{CLi}, system \eqref{genCH2_uandrho} has two associated characteristics $q$ and $\tilde{q}$
given by the following initial-value problems
\begin{equation}\label{trajectory_u}
\left\{\begin{array}{ll}
\displaystyle \frac{\partial q}{\partial t}=u(t,q), & 0<t<T,\\
q(0,x)=x, & x\in\mathbb{R},
\end{array}\right.
\end{equation}
\begin{equation}
\left\{\begin{array}{ll}
\displaystyle \frac{\partial \tilde{q}}{\partial t}=\sigma u(t,\tilde{q}), & 0<t<T,\\
\tilde{q}(0,x)=x, & x\in\mathbb{R},
\end{array}\right.
\end{equation}
where $u\in C^1([0,T),H^{s-1})$ is the first component of the
solution $(u,\rho)$ to \eqref{genCH2_uandrho} with initial data
$(u_0,\rho_0)\in H^s\times H^{s-1}$ with $s>3/2$ and $T>0$ is the
maximal time of existence. When $\sigma=0$, the second one $\tilde{q}$ becomes stationary. Thus we will perform the estimates along
the first characteristics $q$.

A direct calculation shows that for $t>0, x\in\mathbb{R}$
\[
\displaystyle q_{x}(t,x)=e^{\int^t_0u_x(\tau,
q(\tau,x))d\tau}>0.
\]
Hence $q(t,\cdot): \mathbb{R}\to\mathbb{R}$ is a diffeomorphism of the line
for each $t\in[0,T)$. Hence the $L^\infty$ norm of any function
$v(t,\cdot)\in L^\infty(\mathbb{R}), t\in[0,T)$ is preserved under $q(t,\cdot)$ with
$t\in[0,T)$, i.e.,
\begin{equation}\label{normpreserving}
\|v(t,\cdot)\|_{L^\infty(\mathbb{R})}=\|v(t,q(t,\cdot))\|_{L^\infty(\mathbb{R})},\quad
t\in[0,T).
\end{equation}
Similarly we have
\begin{align}
&\inf_{x\in\mathbb{R}}v(t,x)=\inf_{x\in\mathbb{R}}v(t,q(t,x)),\quad
t\in[0,T),\\
&\sup_{x\in\mathbb{R}}v(t,x)=\sup_{x\in\mathbb{R}}v(t,q(t,x)),\quad
t\in[0,T). \label{suppreserving}
\end{align}


\vspace{.15in}



When $\sigma=0$, we can rewrite system \eqref{genCH2_uandrho} as
\begin{equation}\label{genCH2_sigma=0}
\left\{\begin{array}{l}
u_t+\partial_xp\ast\left(-Au+\frac{3}{2}u^2+\frac12\rho^2\right)=0,\\
\rho_t+(\rho u)_x=0,
\end{array}\right.
\end{equation}
where $p(x)$ is defined in \eqref{def_p}

The following lemma is needed in carrying out the estimates along the ``extremal'' characteristics.
\begin{lemma} (\cite{CE}) \label{lm_trajectory}
 Let $T>0$ and $v\in C^1\LC [0,T);H^2(\mathbb{R}) \RC$. Then for every $t\in [0,T)$ there exists at least one point $\xi(t)\in \mathbb{R}$ with
\[
 m(t):=\inf_{x\in\mathbb{R}}\LB v_x(t,x) \RB = v_x\LC t,\xi(t) \RC.
\]
The function $m(t)$ is absolutely continuous on $(0,T)$ with
\[
 {dm(t)\over dt} = v_{tx}\LC t,\xi(t) \RC \quad \hbox{ a.e. on } (0,T).
\]
\end{lemma}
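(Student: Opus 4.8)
The plan is to regard $m(t)=\inf_{x\in\mathbb{R}}v_x(t,x)$ as the lower envelope of a family of functions smooth in the time variable, and to exploit the minimizing property of $\xi(t)$ to produce two-sided difference-quotient estimates. First I would settle the pointwise existence of $\xi(t)$ and record the regularity of the time derivative. For each fixed $t$ we have $v_x(t,\cdot)\in H^1(\mathbb{R})$, and the embedding $H^1(\mathbb{R})\hookrightarrow C_0(\mathbb{R})$ shows that $v_x(t,\cdot)$ is continuous and vanishes at infinity; hence $m(t)\leq 0$, and whenever $m(t)<0$ the infimum is attained at some $\xi(t)\in\mathbb{R}$ (in the borderline case $m(t)=0$ one may pass to a minimizing sequence, a situation which does not affect the computation below). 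From $v\in C^1\LC[0,T);H^2(\mathbb{R})\RC$ we get $v_t\in C\LC[0,T);H^2(\mathbb{R})\RC$, so $v_{tx}\in C\LC[0,T);H^1(\mathbb{R})\RC\hookrightarrow C\LC[0,T);C_0(\mathbb{R})\RC$; thus $(\tau,x)\mapsto v_{tx}(\tau,x)$ is jointly continuous and, on each compact subinterval $[0,T']\subset[0,T)$, uniformly bounded by $M:=\sup_{\tau\in[0,T']}\|v_{tx}(\tau,\cdot)\|_{L^\infty}<\infty$.

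The heart of the argument is a sandwich inequality. For any $x$ the fundamental theorem of calculus in time gives $v_x(t,x)-v_x(s,x)=\int_s^t v_{tx}(\tau,x)\,d\tau$. Using that $\xi(s)$ minimizes $v_x(s,\cdot)$ and $\xi(t)$ minimizes $v_x(t,\cdot)$, I obtain, for $s<t$,
\[
\int_s^t v_{tx}(\tau,\xi(t))\,d\tau\ \leq\ m(t)-m(s)\ \leq\ \int_s^t v_{tx}(\tau,\xi(s))\,d\tau .
\]
Indeed the left inequality follows from $m(s)=v_x(s,\xi(s))\leq v_x(s,\xi(t))$ and the right from $m(t)=v_x(t,\xi(t))\leq v_x(t,\xi(s))$. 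The uniform bound $M$ then yields $|m(t)-m(s)|\leq M|t-s|$, so $m$ is locally Lipschitz, hence absolutely continuous on $(0,T)$.

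To identify the derivative I fix a point $t_0\in(0,T)$ at which $m$ is differentiable — almost every $t_0$, since locally Lipschitz functions are differentiable almost everywhere — and pick any minimizer $\xi(t_0)$. The key device is to keep only the fixed point $\xi(t_0)$ in the favorable half of the sandwich on each side, thereby avoiding any use of the possibly discontinuous map $t\mapsto\xi(t)$. For the right quotient, the right half with $s=t_0$, $t=t_0+h$, $h\downarrow0$, gives $\frac{m(t_0+h)-m(t_0)}{h}\leq\frac1h\int_{t_0}^{t_0+h}v_{tx}(\tau,\xi(t_0))\,d\tau\to v_{tx}(t_0,\xi(t_0))$, so $m'(t_0)\leq v_{tx}(t_0,\xi(t_0))$. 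For the left quotient, the left half with $s=t_0+h$, $t=t_0$, $h\uparrow0$, gives after dividing by $-h>0$ that $\frac{m(t_0+h)-m(t_0)}{h}\geq\frac{1}{-h}\int_{t_0+h}^{t_0}v_{tx}(\tau,\xi(t_0))\,d\tau\to v_{tx}(t_0,\xi(t_0))$, so $m'(t_0)\geq v_{tx}(t_0,\xi(t_0))$. Combining the two inequalities yields $m'(t_0)=v_{tx}(t_0,\xi(t_0))$ for a.e. $t_0\in(0,T)$.

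The main obstacle is precisely the lack of control on $t\mapsto\xi(t)$: the minimizer need not be unique or continuous, so one cannot pass to the limit in the branch of the sandwich that involves the varying point $\xi(s)$. The resolution is the asymmetric choice above — using the upper estimate on the right side and the lower estimate on the left side, each of which retains only the \emph{fixed} minimizer $\xi(t_0)$ — which delivers the derivative formula at every differentiability point without any regularity assumption on $\xi$.
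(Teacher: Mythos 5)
The paper does not prove this lemma; it is quoted verbatim from the cited reference \cite{CE}, and your argument is precisely the standard Constantin--Escher proof: the two-sided sandwich $\int_s^t v_{tx}(\tau,\xi(t))\,d\tau\leq m(t)-m(s)\leq\int_s^t v_{tx}(\tau,\xi(s))\,d\tau$ obtained from the minimizing property, the resulting local Lipschitz bound, and the asymmetric use of the two branches so that only the fixed minimizer $\xi(t_0)$ survives in each one-sided limit. The only loose end is the existence of $\xi(t)$ in the case $m(t)=0$, which you defer to a minimizing sequence; in fact attainment always holds, since $m(t)=0$ forces $v_x(t,\cdot)\geq0$, and a nondecreasing $v(t,\cdot)\in H^2(\mathbb{R})$ tending to zero at both ends must vanish identically, so any $\xi$ works. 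With that observation supplied, the proof is complete and coincides with the one the paper relies on.
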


To prove Theorem \ref{thm_globalwellposedness} of global well-posedness of solutions, we
need the following estimates for $u_x$.
\begin{lemma}\label{lm_estu_x}
 Let $\sigma=0$ and $(u,\rho)$ be the solution of
\eqref{genCH2_sigma=0} with initial data $(u_0, \rho_0-1)\in
H^s(\mathbb{R})\times H^{s-1}(\mathbb{R})$, $s>3/2$, and $T$ the
maximal time of existence. Then
\begin{align}
 &\sup_{x\in\mathbb{R}}u_x(t,x)\leq \sup_{x\in\mathbb{R}}u_{0,x}(x)
+{1\over2}\LC \sup_{x\in\mathbb{R}}\rho^2_0(x)+C_1^2 \RC t, \label{estsupu_x} \\
 &\inf_{x\in\mathbb{R}}u_x(t,x)\geq \inf_{x\in\mathbb{R}}u_{0,x}(x)
+{1\over2}\LC \inf_{x\in\mathbb{R}}\rho^2_0(x)-C_2^2 \RC t \label{estinfu_x},
\end{align}
where the constants above are defined as follows.
\begin{align}
&C_1=\sqrt{\frac{3+A^2}{2}}\|(u_0, \rho_0-1)\|_{H^1\times L^2},\label{defnC_1}\\
&C_2=\sqrt{2+C_1^2}.\label{defnC_2}
\end{align}
\end{lemma}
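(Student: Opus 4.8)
The plan is to turn the PDE into a pointwise-in-time differential inequality for $M(t):=\sup_{x}u_x(t,x)$ and $m(t):=\inf_{x}u_x(t,x)$ and then integrate in $t$. First I would differentiate the first equation of \eqref{genCH2_sigma=0} in $x$. Since $\partial_x^2(p\ast f)=p\ast f-f$, this eliminates the outer derivative and gives the key identity
\begin{equation*}
u_{tx}=G-p\ast G,\qquad G:=-Au+\tfrac{3}{2}u^2+\tfrac{1}{2}\rho^2 ,
\end{equation*}
which contains \emph{no} $u_{xx}$ term. This is precisely why, when $\sigma=0$, one should not differentiate $u_x$ along the characteristics \eqref{trajectory_u} (that would produce an uncontrolled $uu_{xx}$); instead I apply Lemma \ref{lm_trajectory} directly to $v=u$. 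It provides points $\zeta(t),\xi(t)$ with $M(t)=u_x(t,\zeta(t))$ and $m(t)=u_x(t,\xi(t))$, and states that $M,m$ are absolutely continuous with $M'(t)=u_{tx}(t,\zeta(t))$ and $m'(t)=u_{tx}(t,\xi(t))$ a.e.

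Next I would estimate $G-p\ast G$ at the extremal points, splitting off the local value $G$ from the nonlocal average $p\ast G$. All the nonlocal and linear pieces are controlled by the conserved energy $H_1=\tfrac12\|(u,\rho-1)\|_{H^1\times L^2}^2$ (whose invariance is part of Theorem \ref{thm_globalwellposedness}). Using $\|u\|_{L^\infty}^2\le\tfrac12\|u\|_{H^1}^2$, $\|p\|_{L^2}=\|p\|_{L^\infty}=\tfrac12$, and the convolution bounds $\|p\ast u\|_\infty\le\tfrac12\|u\|_{L^2}$, $\|p\ast u^2\|_\infty\le\tfrac12\|u\|_{L^2}^2$, together with $\|p\ast\rho^2\|_\infty\le\tfrac12\|\rho-1\|_{L^2}^2+\|\rho-1\|_{L^2}+1$ (obtained after writing $\rho^2=(\rho-1)^2+2(\rho-1)+1$ and noting $p\ast 1=1$), and finally elementary Young/Cauchy--Schwarz inequalities to absorb the $A$-linear terms, I expect these contributions to collect into exactly $\tfrac12 C_1^2$ for the upper estimate. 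In the upper estimate the terms $-\tfrac32 p\ast u^2$ and $-\tfrac12 p\ast\rho^2$ carry a favorable sign and are simply discarded; in the lower estimate they appear with the opposite sign and must be kept, and it is the constant $p\ast 1=1$ in $p\ast\rho^2$ that accounts for the extra ``$2$'' in $C_2^2=2+C_1^2$.

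The delicate term is the local $\tfrac12\rho^2$ evaluated at the \emph{moving} extremal point, because $\sup_x\rho^2(t,\cdot)$ is not controlled by the energy and can in fact grow (e.g.\ for $\rho_0\equiv1$, where $\rho=1/q_x>1$ wherever $u_x<0$). Here I would use the Lagrangian structure of the second equation: along the flow $q$ of \eqref{trajectory_u} one has $\frac{d}{dt}\rho^2(t,q)=-2u_x(t,q)\,\rho^2(t,q)$, equivalently $\rho\,q_x=\rho_0$. Since $u\to0$ at $\pm\infty$ forces $\int_{\mathbb R}u_x\,dx=0$, the continuous function $u_x$ satisfies $M(t)\ge 0\ge m(t)$. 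Thus at the maximum $u_x(t,\zeta(t))\ge0$, so $\rho^2$ is nonincreasing along the characteristic through $\zeta(t)$ and $\rho^2(t,\zeta(t))\le\sup_x\rho_0^2$; at the minimum $u_x(t,\xi(t))\le0$, so $\rho^2$ is nondecreasing there and $\rho^2(t,\xi(t))\ge\inf_x\rho_0^2$. This ``extremal characteristic'' monotonicity is the crux of the proof and the step I expect to be the main obstacle: it must be made rigorous by following the characteristic foot of the extremal point rather than the (possibly growing) global supremum/infimum of $\rho^2$.

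Assembling the pieces yields $M'(t)\le\tfrac12\sup_x\rho_0^2+\tfrac12 C_1^2$ and $m'(t)\ge\tfrac12\inf_x\rho_0^2-\tfrac12 C_2^2$ a.e.\ in $t$, and integrating from $0$ to $t$ gives \eqref{estsupu_x} and \eqref{estinfu_x}. Finally I would dispose of a regularity caveat: Lemma \ref{lm_trajectory} requires $u\in C^1([0,T);H^2)$, which is not guaranteed for $s>3/2$. I would therefore first establish the two inequalities for smooth solutions (say $s\ge 3$), and then pass to general data $(u_0,\rho_0-1)\in H^s\times H^{s-1}$ with $s>3/2$ by approximation, invoking the continuous dependence on the initial data and the fact that the existence time $T$ is independent of $s$ (Theorem \ref{thm_localwellposedness}).
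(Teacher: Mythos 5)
Your proposal follows the paper's proof essentially step for step: the same differentiated identity $u_{tx}=G-p\ast G$, the same use of Lemma \ref{lm_trajectory} at the extremal points of $u_x$, the same energy bounds on the nonlocal part producing exactly $\tfrac12 C_1^2$ and $\tfrac12 C_2^2$, the same monotonicity of $\rho^2$ along the characteristic through the extremal point (the paper writes this as $\bar\zeta'=-\bar\zeta\bar m$ with $\bar m\ge 0$ at the supremum, resp.\ $\zeta'=-\zeta m$ with $m\le 0$ at the infimum), and the same reduction to $s\ge 3$ by density. The delicate step you single out is handled in the paper with precisely the device you describe --- evaluating $\rho$ at $q(t,x_1(t))=\bar\xi(t)$ and invoking the transport ODE for $\rho$ along that characteristic --- so your plan matches the published argument.
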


\begin{proof}
The local well-posedness theorem and a density argument implies that
it suffices to prove the desired estimates for $s\geq 3$. Thus we
take $s=3$ in the proof. Also we may assume that
\begin{equation}\label{nonzerocond}
u_0\not\equiv0.
\end{equation}
Otherwise the results become trivial. Since now $s\geq3$, we have $u\in C^1_0(\mathbb{R})$. 
Therefore
\begin{equation}\label{signcond}
 \inf_{x\in\mathbb{R}}u_x(t,x)\leq0, \quad \sup_{x\in\mathbb{R}}u_x(t,x)\geq0, \quad t\in[0,T).
\end{equation}

Differentiating the first equation of \eqref{genCH2_sigma=0} with respect to $x$ and using
the identity $-\partial^2_xp\ast f=f-p\ast f$ we obtain
\begin{equation}\label{diffeqn}
u_{tx}=\frac12\rho^2+\frac{3}{2}u^2+A\partial^2_xp\ast
u-p\ast\LC \frac{3}{2}u^2+\frac12\rho^2\RC.
\end{equation}

Using Lemma \ref{lm_trajectory} and the fact that
\begin{equation*}
 \sup_{x\in\mathbb{R}}\LB v_x(t,x) \RB=-\inf_{x\in\mathbb{R}}\LB -v_x(t,x) \RB,
\end{equation*}
we can consider $\bar{m}(t)$ and $\bar{\xi}(t)$ as follows
\begin{equation}\label{defnbarm}
 \bar{m}(t):= u_x\LC t,\bar{\xi}(t) \RC=\sup_{x\in\mathbb{R}}\LC u_x(t,x) \RC, \quad t\in
 [0,T).
\end{equation}
Hence
\begin{equation}\label{zerou_xxeta}
 u_{xx}\LC t, \bar{\xi}(t) \RC=0, \quad \hbox{a.e.}\quad t\in[0,T).
\end{equation}
Take the trajectory $q(t,x)$ defined in \eqref{trajectory_u}. Then we know that
$q(t,\cdot): \mathbb{R}\to\mathbb{R}$ is a diffeomorphism for every $t\in[0,T)$.
Therefore there exists $x_1(t)\in \mathbb{R}$ such that
\begin{equation}\label{choiceofx_1}
 q\LC t,x_1(t) \RC = \bar{\xi}(t) \quad t\in[0,T).
\end{equation}
Now let
\begin{equation}\label{defnbarzeta}
\bar{\zeta}(t)=\rho(t,q(t,x_1)),\quad
 t\in[0,T).
\end{equation}
Therefore along this trajectory $q(t,x_1)$ equation \eqref{diffeqn} and the
second equation of \eqref{genCH2_sigma=0} become
\begin{align}\label{blowupeqn}
&\bar{m}'(t)=\frac12\bar{\zeta}^2+f(t,q(t,x_1)),\nonumber\\
&\bar{\zeta}'(t)=-\bar{\zeta} \bar{m},
\end{align}
for $t\in[0,T)$, where $'$ denotes the derivative with respect to
$t$ and $f(t,q(t,x))$ is given by
\begin{equation}\label{defnf}
f=\frac{3}{2}u^2+A\partial^2_xp\ast
u-p\ast\LC \frac{3}{2}u^2+\frac12\rho^2\RC.
\end{equation}

We first derive the upper and lower bounds for $f$ for later use in
getting the wave-breaking result. Using that $\partial^2_x p\ast
u=p_x\ast u_x$, we have
\begin{align*}
f&=\frac{3}{2}u^2+Ap_x\ast
u_x-{3\over2}p\ast u^2-\frac12p\ast1
-p\ast(\rho-1)-\frac12p\ast(\rho-1)^2\\
&\leq\frac{3}{2}u^2+A|p_x\ast
u_x|-\frac12+|p\ast(\rho-1)|.
\end{align*}
Since
\begin{align}
A|p_x\ast u_x|&\leq
A\|p_x\|_{L^2}\|u_x\|_{L^2}=\frac12A\|u_x\|_{L^2}\leq\frac14+\frac14A^2\|u_x\|^2_{L^2},\label{control_convpu}\\
|p\ast(\rho-1)|&\leq
\|p\|_{L^2}\|\rho-1\|_{L^2}=\frac12\|\rho-1\|_{L^2}\leq
\frac14+\frac14\|\rho-1\|^2_{L^2},\label{control_convrho}\\
u^2 & \leq \frac{1}{2}\int_{\mathbb{R}}(u^2+u^2_x)\ dx,\label{control_u^2}
\end{align}
we obtain the upper bound of $f$
\begin{align}\nonumber
f&\leq
\frac14\|\rho-1\|^2_{L^2}+\frac{3}{4}\|u\|^2_{L^2}+
\frac{3+A^2}{4}\|u_x\|_{L^2}\\
&\leq \frac{3+A^2}{4}\|(u_0, \rho_0-1)\|^2_{H^1\times L^2}= \frac12C^2_1.\label{upperboundf}
\end{align}

Now we turn to the lower bound of $f$.
Similar as before, we get
\begin{align}
-f&\leq A|p_x\ast
u_x|+{3\over2}p\ast u^2+\frac12+|p\ast(\rho-1)|+\frac12p\ast(\rho-1)^2.\nonumber\\
&\leq
1+\frac{A^2}{4}\|u_x\|^2_{L^2}+\frac{3}{4}\|u\|^2_{L^2}+\frac12\|\rho-1\|^2_{L^2} \nonumber\\
&\leq 1+\frac{3+A^2}{4}\|(u_0, \rho_0-1)\|^2_{H^1\times L^2}= \frac12C^2_2, \label{lowerboundf}
\end{align}
where we have used the inequality
\begin{equation*}
 p\ast g^2\leq {1\over2}\|g^2\|_{L^1}={1\over2}\|g\|^2_{L^2}.
\end{equation*}

Combining \eqref{upperboundf} and \eqref{lowerboundf} we obtain
\begin{equation}\label{boundf}
 |f|\leq 1+\frac{3+A^2}{4}\|(u_0, \rho_0-1)\|^2_{H^1\times L^2}.
\end{equation}

From \eqref{signcond} we know $\bar{m}(t)\geq0$ for $t\in[0,T)$. From the second equation
of \eqref{blowupeqn} we obtain that
\begin{equation}\label{solnbarzeta}
\bar{\zeta}(t)=\bar{\zeta}(0) e^{-\int_0^t\bar{m}(\tau)d\tau}.
\end{equation}
Hence
\begin{align*}
 |\rho(t,q(t,x_1))|=|\bar{\zeta}(t)|\leq |\bar{\zeta}(0)|.
\end{align*}
Therefore we have
\begin{align*}
 \bar{m}'(t)={1\over2}\bar{\zeta}^2(t)+f\leq {1\over2}\bar{\zeta}^2(0)+{1\over2}C^2_1
\leq {1\over2}\LC \sup_{x\in\mathbb{R}}\rho^2_0(x)+C^2_1 \RC.
\end{align*}
Integrating the above from over $[0,t]$ we prove \eqref{estsupu_x}.

To obtain a lower bound for $\inf_{x\in\mathbb{R}}u_x(t,x)$, we use the similar idea.
Consider the functions $m(t)$ and $\xi(t)$ as in Lemma
\ref{lm_trajectory}
\begin{equation}\label{defnm}
 m(t):= u_x\LC t,\xi(t) \RC=\inf_{x\in\mathbb{R}}\LC u_x(t,x) \RC, \quad t\in [0,T).
\end{equation}
Hence
\begin{equation}\label{zerou_xx}
 u_{xx}\LC t,\xi(t) \RC = 0 \quad \hbox{ a.e. } t\in [0,T).
\end{equation}
Again take the characteristics $q(t,x)$ defined in \eqref{trajectory_u}
and choose $x_2(t)\in \mathbb{R}$ such that
\begin{equation}\label{choiceofx_2}
 q\LC t,x_2(t) \RC = \xi(t) \quad t\in[0,T).
\end{equation}
Let
\begin{equation}\label{defnzeta}
 \zeta(t)=\rho\LC t,q(t,x_2) \RC, \quad t\in[0,T).
\end{equation}
Hence along this trajectory $q(t,x_2)$ equation \eqref{diffeqn} and the
second equation of \eqref{genCH2_sigma=0} become
\begin{align}\label{blowupeqninf}
&m'(t)=\frac12{\zeta}^2+f(t,q(t,x_2)),\nonumber\\
&{\zeta}'(t)=-{\zeta} {m}.
\end{align}

Since $m(t)\geq0$, we have from the second equation of the above that
\begin{align*}
 |\rho(t,q(t,x_2))|=|{\zeta}(t)|\geq |{\zeta}(0)|.
\end{align*}
Then
\begin{align*}
 {m}'(t)\geq {1\over2}{\zeta}^2(0)-{1\over2}C^2_2
\geq {1\over2}\LC \inf_{x\in\mathbb{R}}\rho^2_0(x)-C^2_2 \RC.
\end{align*}
Integrating the above from over $[0,t]$ we obtain \eqref{estinfu_x}. This completes the proof of Lemma  Lemma \ref{lm_estu_x}.
\end{proof}

\vspace{.1in}

\begin{proof}[Proof of Theorem \ref{thm_globalwellposedness}]
Combining Lemma \ref{lm_estu_x} and Theorem \ref{thm_blowup1} we easily see that the local
solution obtained in Theorem \ref{thm_localwellposedness} can be extended to all of the interval $[0,\infty)$.
\end{proof}


\medskip
\medskip





\begin{thebibliography}{99}


\bibitem{BSS}
R. Beals, D. Sattinger and J. Szmigielski, {\em Multi-peakons and a theorem of Stieltjes}, Inverse Problems \textbf{15} (1999), L1--L4.

\bibitem{BBM}
T.B. Benjamin, J.L. Bona and J.J. Mahony, {\em Model equations for long waves in nonlinear dispersive systems},  Philos. Trans. Roy. Soc. London Ser. A  \textbf{272}  (1972), 47--78.





\bibitem{CH1}
R. Camassa and D. Holm, {\em An integrable shallow water equation with peaked solitons}, Phys. Rev. Lett. \textbf{71} (1993), 1661--1664.



\bibitem{CLi}
R.M. Chen and Y. Liu, {\em Wave-breaking and global existence for a
generalized two-component Camassa-Holm system}, Int. Math. Res.
Not., (2010), in press.

\bibitem{Co1}
A. Constantin, {\em On the Cauchy problem for the periodic Camassa-Holm equation},
J. Differential Equations, \textbf{141} (1997), 218--235.

\bibitem{Co2}
A. Constantin, {\em Global existence of solutions and breaking waves for a shallow
water equation: A geometric approach}, Ann. Inst. Fourier (Grenoble), \textbf{50} (2000),
321--362.

\bibitem{Co3}
A. Constantin, {\em On the blow-up of solutions of a periodic shallow water equation},
J. Nonlinear Sci., \textbf{10} (2000), 391--399.

\bibitem{Co4}
A. Constantin, {\em The trajectories of particles in Stokes waves}, Invent. Math., \textbf{166}
(2006), 523--535.

\bibitem{CE}
A. Constantin and J. Escher, {\em Wave breaking for nonlinear nonlocal shallow water equations}, Acta Math., \textbf{181} (1998), 229--243.

\bibitem{CE1}
A. Constantin and J. Escher, {\em Well-posedness, global existence and blow-up phenomena
for a periodic quasi-linear hyperbolic equation}, Comm. Pure Appl. Math., \textbf{51} (1998), 475--504.

\bibitem{CI}
A. Constantin and R. Ivanov, {\em On the integrable two-component Camassa-Holm
shallow water system}, Phys. Lett. A, \textbf{372} (2008), 7129--7132.


\bibitem{CL}
A. Constantin and D. Lannes, {\em The hydrodynamical relevance of the Camassa-Holm and Degasperis-Procesi equations}, Arch. Ration. Mech. Anal., \textbf{192} (2009), 165--186.

\bibitem{CM}
A. Constantin and H. P. McKean, {\em A shallow water equation on the circle}, Comm.
Pure Appl. Math., \textbf{52} (1999), 949--982.

\bibitem{CR}
A. Constantin and R.I. Ivanov, {\em On an integrable two-component Camassa-Holm shallow water system}, Phys. Lett. A, \textbf{372} (2008), 7129--7132.

\bibitem{CMo}
A. Constantin and L. Molinet, {\em Orbital stability of solitary waves for a shallow water equation}, Phys. D \textbf{157} (2001) 75--89.

\bibitem{CW1}
A. Constantin and W.A. Strauss, {\em Stability of peakons}, Comm. Pure Appl. Math.,
53 (2000), 603--610.

\bibitem{CW2}
A. Constantin and W.A. Strauss, {\em Stability of the Camassa-Holm solitons}, J. Nonlinear Sci.  \textbf{12} (2002), 415--422.


\bibitem{CW4}
A. Constantin and W.A. Strauss, {\em Stability of a class of solitary waves in compressible elastic rods},  Phys. Lett. A  \textbf{270}  (2000),  140--148.

\bibitem{Dai}
H.-H. Dai, {\em Model equations for nonlinear dispersive waves in a compressible Mooney-Rivlin rod}, Acta Mech., \textbf{127} (1998), 193--207.


\bibitem{Dr}
P.G. Drazin, {\em Solitons}, London Mathematical Society Lecture Note Series, Cambridge University Press, Cambridge, 1983.

\bibitem{DGH1}
H. Dullin, G. Gottwald and D. Holm, {\em On asymptotically equivalent shallow water wave equations},
Physica D, \textbf{190} (2004) 1--14.

\bibitem{DGH2}
H. Dullin, G. Gottwald and D. Holm, {\em Camassa-Holm, Korteweg-de~Vries-5 and other asymptotically
equivalent equations for shallow water waves}, Fluid Dyn. Res., \textbf{33} (2003) 73--95.

\bibitem{DGH3}
H. Dullin, G. Gottwald and D. Holm, {\em An integrable shallow water equation with linear and
nonlinear dispersion}, Phys. Rev. Lett., \textbf{87}, (2001) 194501--04.

\bibitem{DS}
N. Dunford and J.T. Schwartz, {\em Linear Operators. Part II: Spectral Theory}, Selfadjoint Operators in Hilbert Spaces, Wiley, New York, 1988.


\bibitem{FF}
A. Fokas and B. Fuchssteiner, {\em Symplectic structures, their B$\ddot{\hbox{a}}$cklund transformation and hereditary symmetries}, Phys. D, \textbf{4} (1981), 47--66.


\bibitem{GSS}
M. Grillakis, J. Shatah, and W. Strauss, {\em Stability theory of solitary waves in the presence of
symmetry}, J. Funct. Anal. \textbf{74} (1987), 160--197.

\bibitem{GL1}
G. Gui and Y. Liu, {\em On the Cauchy problem for the two-component Camassa-Holm system}, Math. Z., (2010), DOI: 10.1007/s00209-009-0660-2.


\bibitem{HNT}
D. Holm, L N\'araigh and C. Tronci, {\em Singular solutions of a modified two-component Camassa-Holm equation} arXiv:0809.2538.


\bibitem{Iva}
R. Ivanov, {\em Two-component integrable systems modelling shallow
water waves:The constant vorticity case}, Wave Motion \textbf{46}
(2009), 389--396.

\bibitem{Jo1}
R.S. Johnson, {\em Camassa-Holm, Korteweg-de Vries and related models for water waves}, J. Fluid Mech. \textbf{455} (2002), 63--82.

\bibitem{Jo2}
R.S. Johnson, {\em The Camassa-Holm equation for water waves moving over a shear flow}, Fluid Dynam. Res. \textbf{33} (2003), 97--111.



\bibitem{KPV}
C. Kenig, G. Ponce and L. Vega, {\em Well-posedness and scattering results for the
generalized Korteweg-de Vries equation via the contraction principle}, Comm. Pure
Appl. Math., 46 (1993), 527--620.


\bibitem{Le}
J. Lenells, {\em Traveling wave solutions of the Camassa-Holm equation}, J. Differential Equations, \textbf{217} (2005), 393--430.

\bibitem{Le0}
J. Lenells, {\em Traveling waves in compressible elastic rods},
Discrete Contin. Dyn. Syst., \textbf{6} (2006), 151--167.

\bibitem{Le1}
J. Lenells, {\em Stability of periodic peakons}, Int. Math. Res. Not., \textbf{10} (2004), 485--499.

\bibitem{Le2}
J. Lenells, {\em A variational approach to the stability of periodic peakons}, J. Nonlinear Math Phys., \textbf{11} (2) (2004) 151--163.

\bibitem{LO}
Y. Li and P. Olver, {\em Convergence of solitary-wave solutions in a
perturbed bi-Hamiltonian dynamical system. I. Compactons and
peakons}  Discrete Contin. Dyn. Syst., \textbf{3}  (1997), 419--432.

\bibitem{LZ}
Y. Liu and P. Zhang, {\em Stability of solitary waves and wave-breaking phenomena for the two-component Camassa-Holm system},  Int. Math. Res. Not., \textbf{211} (2009), 41 pages.

\bibitem{Mc}
H. P. Mckean, {\em Integrable systems and algebraic curves}, Global Analysis, Springer Lecture Notes in Mathematics, Vol. 755, Springer, Berlin, 1979, 83--200.
867--874.

\bibitem{Mu}
O. Mustafa, {\em On smooth traveling waves of an integrable two-component Camassa-Holm shallow water system}, Wave Motion \textbf{46} (2009), 397--402.

\bibitem{Na}
G.A. Nariboli, {\em Nonlinear longitudinal dispersive waves in elastic rods}, J. Math. Phys. Sci., \textbf{4} (1970), 64--73.



\bibitem{SA}
A. Shabat, L. Mart\'{i}nez Alonso, {\em On the prolongation of a hierarchy of hydrodynamic chains}, Proceedings of the NATO advanced research workshop, Cadiz, Spain 2002, NATO Science Series, Kluwer Academic Publishers, Dordrecht, (2004), 263--280.


\bibitem{St}
W.A. Strauss, {\em Nonlinear Wave Equtions}, Conf. Board Math. Sci., Vol 73, American Mathematical Society, Providence, RI, 1989.


\bibitem{To}
J. F. Toland, {\em Stokes waves}, Topol. Methods Nonlinear Anal., \textbf{7} (1996), 1--48.

\bibitem{Wh}
G.B. Whitham, {\em Linear and Nonlinear Waves}, John Wiley \& Sons, New York, 1980.

\end{thebibliography}
\end{document}